\title{Iteration-Complexity of a Generalized Forward Backward Splitting Algorithm}
\name{Jingwei Liang$^{1}$, Jalal M. Fadili$^{1}$ and Gabriel Peyr\'{e}$^{2}$\thanks{This work has been supported by the ERC project SIGMA-Vision and l'Institut Universitaire de France. We would like to thank Yuchao Tang for pointing \cite{ogura2002non} to us.}}
\address{$^{1}$ GREYC, CNRS-ENSICAEN ~~~~ $^{2}$ CEREMADE, CNRS-Paris-Dauphine}
\begin{document}
%\ninept
%
\maketitle
%-------------------------------------------------------------------------------------------
\begin{abstract}
In this paper, we analyze the iteration-complexity of Generalized Forward--Backward (GFB) splitting algorithm, as proposed in~\cite{gfb2011}, for minimizing a large class of composite objectives $f + \sum_{i=1}^n h_i$ on a Hilbert space, where $f$ has a Lipschitz-continuous gradient and the $h_i$'s are simple (\ie~their proximity operators are easy to compute). We derive iteration-complexity bounds (pointwise and ergodic) for the inexact version of GFB to obtain an approximate solution based on an easily verifiable termination criterion. Along the way, we prove complexity bounds for relaxed and inexact fixed point iterations built from composition of nonexpansive averaged operators. These results apply more generally to GFB when used to find a zero of a sum of $n > 0$ maximal monotone operators and a co-coercive operator on a Hilbert space. The theoretical findings are exemplified with experiments on video processing.
\end{abstract}
\begin{keywords}
Convex optimization, Proximal splitting, Convergence rates, Inverse problems.
\end{keywords}

%-------------------------------------------------------------------------------------------

\section{Introduction}
\label{sec:intro}

\subsection{Problem statement}
Many structured convex optimization problems in science and engineering, including signal/image processing and machine learning, can be cast as solving
\[
\label{eq:minPhi}
\min_{x\in\H} J(x) := f(x)+\sum_{i=1}^n h_i(x),
\]
where $f\in\lsc$ has ${\beta}^{-1}$-Lipschitz continuous gradient, $h_i\in\lsc$ is simple, and $\lsc$ is the class of lower semicontinuous, proper, convex functions from a Hilbert space $\H$ to $]-\infty,+\infty]$. Some instances of \eqref{eq:minPhi} in signal, image and video processing are considered in Section~\ref{sct:num_expmt} as illustrative examples.

Assume that $\AM J \neq \emptyset$ and that the qualification condition 
\begin{equation*}
\(0,\ldots,0\)\in\mathrm{sri}\ll\(x-y_1,\ldots,x-y_n\)|x\in\H,~\forall i, y_i \in \dom h_i\rr
\end{equation*}
holds, where $\mathrm{sri}$ is the strong relative interior, see~\cite{bauschke2011convex}. Thus, minimizing $J$ in \eqref{eq:minPhi} is equivalent to
\[
\label{eq:mnticln}
\mathrm{Find}~x\in \mathrm{zer}(\partial J):=\left\{ x\in\H|0\in \nabla f(x)+\msum_{i=1}^n \partial h_i(x) \right\} .
\]
Although we only focus on optimization problems \eqref{eq:minPhi} in the sequel, our results apply more generally to monotone inclusion problems of the form
\begin{equation}
\label{eq:mnticlngen}
\mathrm{Find}~x\in \left\{ \mathrm{zer}(B+\sum_{i=1}^n A_i):=\big\{ x\in\H|0\in Bx+\sum_{i=1}^n A_ix \big\} \right\},
\end{equation}
where $B:\H\mapsto\H$ is $\beta$-cocoercive, and $A_i:\H\mapsto 2^\H$ is a maximal monotone set-valued map.

In this paper, we will establish iteration-complexity bounds of the inexact GFB algorithm~\cite{gfb2011} for solving \eqref{eq:mnticln}, whose steps are summarised in Algorithm~\ref{alg:GFB}. There, $\varepsilon_1^k$ and $\varepsilon_{2,i}^k$ are the errors when computing $\nabla f(\cdot)$ and $\prox_{\frac{\gamma}{\omega_i}h_i}(\cdot)$.
\begin{algorithm}
\label{alg:GFB}
\caption{Inexact GFB Algorithm for solving \eqref{eq:mnticln}.}
\KwIn{$\(z_i\)_{i\in\{1,\cdots,n\}}$, $\(\omega_i\)_{i\in\{1,\cdots,n\}}~\mathrm{and}~\sum_{i=1}^{n}\omega_i=1$, $\gamma\in]0,2\beta[$, $\alpha = \frac{2\beta}{4\beta-\gamma}$ and $\lambda_k\in]0, \tfrac{1}{\alpha}[$.}
$k = 0,~x^0 = \sum_{i=1}^n\omega_i z_i^0$\;
\Repeat{convergence}{
\For{$i=1,\ldots,n$}{
  $v_i^{k+1} = \prox_{\frac{\gamma}{\omega_i}h_i} \big( 2x^k-z_i^k-\gamma \nabla f(x^k) + \varepsilon_1^k \big) + \varepsilon_{2,i}^k;$ \\
  $z_i^{k+1} = z_i^k+\lambda_k ( v_i^{k+1}-x^k );$
}
$x^{k+1} = \sum_{i=1}^n\omega_i z_i^{k+1}$\;
$k = k+1$\;
}
\Return{$x$}\;
\end{algorithm}

When $n=1$, GFB recovers the Forward--Backward splitting algorithm \cite{combettes2005signal}, and when $\nabla f=0$, GFB specializes to the Douglas--Rachford algorithm on product space~\cite{lions1979splitting}.

There has been a recent wave of interest in splitting algorithms to solve monotone inclusions taking the form of \eqref{eq:mnticln} or \eqref{eq:mnticlngen}, or even more general. In particular, several primal-dual splitting schemes were designed such as those in \cite{combettes2012primal,vu2011splitting} or \cite{condat2012primal} in the context of convex optimization. See also \cite{bot2012convergence,bot2013convergence} for convergence rates analysis. The authors in \cite{monteiro2010complexity,monteiro2011complexity} analyze the iteration-complexity of the hybrid proximal extragradient (HPE) method proposed by Solodov and Svaiter. It can be shown that the GFB can be cast in the HPE framework but only for the exact and unrelaxed (\ie $\lambda_k=1$) case. 

% It can be shown that the exact and unrelaxed (i.e. $\lambda_k=1$) version of GFB can be cast in the HPE framework but only for $\gamma \in ]0,\beta]$. 

\subsection{Contributions}
In this paper, we establish pointwise and ergodic iteration-complexity bounds for sequences generated by inexact and relaxed fixed point iterations, in which, the fixed point operator is $\alpha$-averaged. It is a generalization of the result of \cite{cominetti2012rate} to the inexact case, and of \cite{he2011DRconvergence} who only considered the exact Douglas--Rachford method. Then we apply these results to derive iteration-complexity bounds for the GFB algorithm to solve \eqref{eq:minPhi}. This allows us to show that $O(1/\epsilon)$ iterations are needed to find a pair $((u_i)_{i},g)$ with the termination criterion $\norm{g+\nabla f (\ssum_i\omega_i u_i)}^2 \leq \epsilon$, where $g \in \sum_i \partial_i h_i(u_i)$. This termination criterion can be viewed as a generalization of the classical one based on the norm of the gradient for the gradient descent method. The iteration-complexity improves to $O(1/\sqrt{\epsilon})$ in ergodic sense for the same termination criterion. 

%-------------------------------------------------------------------------------------------
%-------------------------------------------------------------------------------------------
\section{Iteration-complexity bounds}
\label{sct:itr_cmplx_bnd}

\subsection{Preliminaries}
\label{sct:prlim}

%%%%%%%%%%%%%%%%%%%%%%%%%%%%%%%%%%%%%%%%%%%%%%%%%%%%%%%%%%%%%
%\subsection{Definitions and Properties}
The class of $\alpha$-averaged non-expansive operators, $\alpha \in ]0,1[$, is denoted $\A(\alpha)=\{T: T = \Id + \alpha(R - \Id)\}$ for some non-expansive operator $R$. For obvious space limitations, we recall in Section~\ref{sec:proofs} only properties of these operators that are essential to our exposition. The reader may refer to e.g., \cite{bauschke2011convex} for a comprehensive account. 

%%%%%%%%%%%%%%%%%%%%%%%%%%%%%%%%%%%%%%%%%%%%%%%%%%%%%%%%%%%%%
%\subsection{Product Space}
%\label{sct:productspace}

Let $(\omega_i)_{i\in\{1,\ldots,n\}}\in]0,1]^n~\st~\sum_i \omega_i=1$. Consider the product space $\bH:=\H^n$ endowed with scalar product $\bdprod{\cdot}{\cdot}$
\begin{equation*}
\forall \bx=(x_i)_i, \by=(y_i)_i \in \bH,~\bdprod{\bx}{\by} ={\msum}_i \omega_i\<x_i,y_i\>,
\end{equation*}
and the corresponding norm $\bnorm{\cdot}$. Define the non-empty subspace $\bS\subset\bH:=\{\bx=\(x_i\)_i\in\bH|x_1=\ldots=x_n\}$, and its orthogonal complement $\bS^\perp\subset\bH:=\{\bx=\(x_i\)_i\in\bH | \sum_{i=1}^n \omega_ix_i=0\}$. Denote $\bId$ as the identity operator on $\bH$, and the canonical isometry: $\bC:\H\mapsto\bS, x\mapsto(x,\ldots,x)$.

%%%%%%%%%%%%%%%%%%%%%%%%%%%%%%%%%%%%%%%%%%%%%%%%%%%%%%%%%%%%%%%%%
\subsection{Inexact relaxed fixed point equation of GFB}
Denote $\bP_{\bS}: \bH\rightarrow\bH, \bz\mapsto\bC(\ssum_i\omega_iz_i)$, $\bR_{\bS}=2\bP_{\bS}-\bId$, $\bB : \bH \to \bH,~ \bx = (x_i)_i \mapsto (\nabla f(x_i))_i$, $\bJ_{\bgamma\cdot\bA}=(\prox_{\frac{\gamma}{\omega_i}h_i})_i$, and $\bR_{\bgamma\cdot\bA}=2\bJ_{\bgamma\cdot\bA}-\bId$. %It is obvious that $\bP_{\bS}$ is linear, so is $\bR_{\bS}$, and for $\bb\in\bS$, $\bR_{\bS}\bb=\bb$.

Let $\bT_{1,\bgamma} = \frac{1}{2}[\bR_{\bgamma\cdot\bA}\bR_{\bS} + \bId]$ and $\bT_{2,\gamma} = [\bId - \gamma\bB \bP_{\bS}]$. We can now define the inexact version of GFB.%, Let $\bepsilon_1^k = \bC(\varepsilon_1^k) \in \bS$ and $\bepsilon_2^k = (\varepsilon_{2,i}^k)_i$., then
%%%%%%%%%%%%%%%%%%%%%%%%%%%%%%%%%%
\begin{proposition}
\label{T_prop}% [leftmargin=0cm,itemindent=0.5cm,labelwidth=\itemindent,labelsep=0cm,align=left]
\begin{enumerate}[label={\rm (\roman{*})}, ref={\rm \ref{T_prop} (\alph{*})}, leftmargin=0cm,itemindent=0.5cm,labelwidth=\itemindent,labelsep=0cm,align=left]
%%%%%%
\item \label{T_prop1} The composed operator $\bT_{1,\bgamma}\circ\bT_{2,\gamma}$ is $\alpha$-averaged monotone with $\alpha = \frac{2\beta}{4\beta-\gamma}$;
%%%%%%
\item \label{T_prop3} The inexact GFB is equivalent to the following relaxed fixed point iteration
\[
\label{eq:error_fp}
\bz^{k+1} = \bz^{k} + \lambda_k \big( \bT_{1,\bgamma} (\bT_{2,\gamma}\bz^k+\bepsilon_1^k ) + \bepsilon_{2}^k - \bz^k \big) ~,
\]
and $(\bz^k)_{k\in\N}$ is quasi-Fej\'er monotone with respect to $\Fix(\bT_{1,\bgamma}\circ\bT_{2,\gamma}) \neq \emptyset$.
%%%%%%
\end{enumerate}

\end{proposition}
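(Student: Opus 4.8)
For part~\ref{T_prop1}, the plan is to exhibit each factor as an averaged operator with an explicit constant and then combine the two with the standard composition rule for averaged maps. The operator $\bT_{1,\bgamma}=\tfrac12[\bR_{\bgamma\cdot\bA}\bR_{\bS}+\bId]$ is the arithmetic mean of $\bId$ and $\bR_{\bgamma\cdot\bA}\bR_{\bS}$, and the latter is nonexpansive: $\bR_{\bS}$ is the reflector of the orthogonal projection onto the closed subspace $\bS$, hence a linear isometry, while $\bR_{\bgamma\cdot\bA}=2\bJ_{\bgamma\cdot\bA}-\bId$ is nonexpansive because each $\prox_{\frac{\gamma}{\omega_i}h_i}$ is firmly nonexpansive, so $\bJ_{\bgamma\cdot\bA}$ is firmly nonexpansive on $(\bH,\bnorm{\cdot})$; hence $\bT_{1,\bgamma}\in\A(1/2)$. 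For $\bT_{2,\gamma}=\bId-\gamma\bB\bP_{\bS}$ the crux is that $\bB$ leaves $\bS$ invariant, since $\nabla f$ acts identically in every coordinate; thus $\bB\bP_{\bS}\bx\in\bS$, the $\bS^\perp$-part of $\bx-\by$ drops out of $\bdprod{\bB\bP_{\bS}\bx-\bB\bP_{\bS}\by}{\bx-\by}$, and this inner product reduces to $\bdprod{\bB\bP_{\bS}\bx-\bB\bP_{\bS}\by}{\bP_{\bS}\bx-\bP_{\bS}\by}$, on which the $\beta$-cocoercivity of $\bB$ on $(\bH,\bnorm{\cdot})$ (Baillon--Haddad applied coordinatewise to $f$) applies. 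This gives that $\bB\bP_{\bS}$ is $\beta$-cocoercive, hence $\gamma\bB\bP_{\bS}$ is $(\beta/\gamma)$-cocoercive, hence $\bT_{2,\gamma}\in\A(\gamma/(2\beta))$ provided $\gamma\in]0,2\beta[$. Finally, the composition of an $\alpha_1$-averaged and an $\alpha_2$-averaged operator is $\alpha$-averaged with $\alpha=1-\tfrac{(1-\alpha_1)(1-\alpha_2)}{1-\alpha_1\alpha_2}$; substituting $\alpha_1=1/2$ and $\alpha_2=\gamma/(2\beta)$ yields $\alpha=\tfrac{2\beta}{4\beta-\gamma}\in]1/2,1[$.

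For the equivalence in part~\ref{T_prop3} I would unroll \eqref{eq:error_fp} coordinatewise on $\bz^k=(z_i^k)_i$. Writing $x^k=\msum_i\omega_iz_i^k$ one has $\bP_{\bS}\bz^k=\bC x^k$ and $\bT_{2,\gamma}\bz^k=(z_i^k-\gamma\nabla f(x^k))_i$, whence $\bR_{\bS}\bT_{2,\gamma}\bz^k=(2x^k-z_i^k-\gamma\nabla f(x^k))_i$ from $\bR_{\bS}=2\bP_{\bS}-\bId$. Using the identity $\bT_{1,\bgamma}\bw=\bJ_{\bgamma\cdot\bA}\bR_{\bS}\bw+(\bw-\bP_{\bS}\bw)$ with $\bw=\bT_{2,\gamma}\bz^k+\bepsilon_1^k$, and identifying $\bepsilon_1^k=\bC(\varepsilon_1^k)$, $\bepsilon_2^k=(\varepsilon_{2,i}^k)_i$ with the error vectors of Algorithm~\ref{alg:GFB}, the $i$-th component of \eqref{eq:error_fp} collapses to the two updates $v_i^{k+1}=\prox_{\frac{\gamma}{\omega_i}h_i}(2x^k-z_i^k-\gamma\nabla f(x^k)+\varepsilon_1^k)+\varepsilon_{2,i}^k$ and $z_i^{k+1}=z_i^k+\lambda_k(v_i^{k+1}-x^k)$; this is a direct, if notation-heavy, computation. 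Nonemptiness of $\Fix(\bT_{1,\bgamma}\circ\bT_{2,\gamma})$ follows from the fixed-point/solution correspondence of~\cite{gfb2011}: $\bz\in\Fix(\bT_{1,\bgamma}\bT_{2,\gamma})$ if and only if $\msum_i\omega_iz_i\in\zer(\nabla f+\msum_i\partial h_i)$, and the latter set is nonempty since $\AM J\neq\emptyset$.

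It remains to establish quasi-Fej\'er monotonicity. Using part~\ref{T_prop1}, set $\bQ=\bId+\tfrac1\alpha(\bT_{1,\bgamma}\bT_{2,\gamma}-\bId)$, which is nonexpansive with $\Fix\bQ=\Fix(\bT_{1,\bgamma}\bT_{2,\gamma})$. Since $\bR_{\bS}$ is an isometry and $\bT_{1,\bgamma}$ is nonexpansive, $\bT_{1,\bgamma}(\bT_{2,\gamma}\bz^k+\bepsilon_1^k)+\bepsilon_2^k=\bT_{1,\bgamma}\bT_{2,\gamma}\bz^k+\br^k$ with $\bnorm{\br^k}\le\bnorm{\bepsilon_1^k}+\bnorm{\bepsilon_2^k}$, so \eqref{eq:error_fp} reads $\bz^{k+1}=(1-\lambda_k\alpha)\bz^k+\lambda_k\alpha\,\bQ\bz^k+\lambda_k\br^k$ with $\lambda_k\alpha\in]0,1[$. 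For any $\bz^\star\in\Fix\bQ$, convexity of $\bnorm{\cdot}^2$, the estimate $\bnorm{\bQ\bz^k-\bz^\star}\le\bnorm{\bz^k-\bz^\star}$, and a triangle inequality for the $\br^k$-term give $\bnorm{\bz^{k+1}-\bz^\star}\le\bnorm{\bz^k-\bz^\star}+\lambda_k(\bnorm{\bepsilon_1^k}+\bnorm{\bepsilon_2^k})$; under the customary summability condition $\sum_{k}\lambda_k(\bnorm{\bepsilon_1^k}+\bnorm{\bepsilon_2^k})<\infty$ this is precisely quasi-Fej\'er monotonicity with respect to $\Fix(\bT_{1,\bgamma}\circ\bT_{2,\gamma})$. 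I expect the main difficulty to lie not in any isolated estimate but in the bookkeeping: establishing cocoercivity of $\bB\bP_{\bS}$ on the weighted product space (the $\bS$-invariance of $\bB$ being the conceptual point) and propagating the averagedness constants through the composition so that they land exactly on $\tfrac{2\beta}{4\beta-\gamma}$, while checking that \eqref{eq:error_fp} reproduces Algorithm~\ref{alg:GFB} is mechanical.
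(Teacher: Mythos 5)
Your proof is correct and follows essentially the same route as the paper, which simply delegates to the cited references: averagedness of $\bT_{1,\bgamma}$ and $\bT_{2,\gamma}$ with constants $\tfrac12$ and $\tfrac{\gamma}{2\beta}$ combined via the Ogura--Yamada composition rule \cite{ogura2002non} to get $\alpha=\tfrac{2\beta}{4\beta-\gamma}$, and the coordinatewise unrolling plus summable-error quasi-Fej\'er argument of \cite{gfb2011,combettes2004solving} for part (ii). Your write-up merely makes explicit what the paper leaves to those references (in particular the $\beta$-cocoercivity of $\bB\bP_{\bS}$ on the weighted product space), and all the computations check out.
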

%%%%%%%%%%%%%%
\begin{proof}
(\rmnum{1}) This a consequence of \cite[Proposition 4.12-13]{gfb2011} and \cite[Theorem 3]{ogura2002non}. (\rmnum{2}) See \cite[Theorem 4.17]{gfb2011}, and \cite[Theorem 3.1]{combettes2004solving} since $\AM J \neq \emptyset$.
\end{proof}

To further lighten the notation, let $\bT = \bT_{1,\bgamma} \circ \bT_{2,\gamma}$. Then \eqref{eq:error_fp} can be rewritten as
\begin{equation*}
\label{eq:error_fp_T}
\bz^{k+1} = \bT_{k} \bz^{k} + \lambda_{k}\bepsilon^{k},
\end{equation*}
where $\bT_{k}=\lambda_{k}\bT + (1-\lambda_{k})\bId \in \A(\alpha\lambda_k)$, $\bepsilon^k = \bT_{1,\bgamma} (\bT_{2,\gamma}\bz^k+\bepsilon_1^k ) + \bepsilon_{2}^k - \bT\bz^k$. We now define the residual term that will be used as a termination criterion for \eqref{eq:error_fp}, \ie
\begin{equation}
\label{eq:ek}
\be^{k} = (\bId-\bT)\bz^{k}={(\bz^{k}-\bz^{k+1})}/{\lambda_{k}} + \bepsilon^{k}.
\end{equation}

%%%%%%%%%%%%%%

\subsection{Iteration complexity bounds of \eqref{eq:error_fp}}
%%%%%%%%%%%%%%%%%%%%%%%%%%%%%%%%%%%%%%%%%%%%%%%%%%%%%%%%%%%%%%%%%
We are now in position to establish our main results on pointwise and ergodic iteration-complexity bounds for the inexact relaxed fixed point iteration \eqref{eq:error_fp}. The proofs are deferred to Section~\ref{sec:proofs}. Define $\tau_k = \lambda_k(\frac{1}{\alpha} - \lambda_k)$, $\underline{\tau}=\inf_{k\in\N}\tau_k$, $\overline{\tau}=\sup_{k\in\N}\tau_k$.
Let $d_0 = \bnorm{\bz^0-\bz^\star}$ be the distance from $\bz^0$ to $\bz^\star \in \Fix \bT$,
$\nu_1 = 2\sup_{k\in\N}\bnorm{\bT_{\lambda_k}\bz^{k}-\bz^\star} + \sup_{k\in\N}\lambda_k\bnorm{\bepsilon^{k}}$ and $\nu_2 = 2\sup_{k\in\N}\bnorm{\be^k-\be^{k+1}}$.
Let $\ell^1_+$ denote the set of summable sequences in $[0,+\infty[$. 

%It is a generalization of \cite{he2011DRconvergence} where the authors provided the pointwise iteration-complexity bound in terms of $\be^{k}$ for the exact and unrelaxed Douglas-Rachford splitting method.

%%%%%%%%%%%%%%%%%%%%%%%%%%%%%%%%%%%%%%%%%%%%%%%%%%%%%%%%%%%%%%%%%
\begin{theorem}[Pointwise iteration-complexity bound of \eqref{eq:error_fp}] 
\label{thm:pointwise_irfpi_bounds} 
$\\$ $\vspace{-0.5cm}$
\begin{enumerate}[label={\rm (\roman{*})}, ref={\rm \ref{thm:pointwise_irfpi_bounds} (\roman{*})}, leftmargin=0cm,itemindent=0.5cm,labelwidth=\itemindent,labelsep=0cm,align=left]
%%%%%%
\item If
\beq
\label{condition2} 
\lambda_k\in]0, 1/\alpha[, ~~ (\tau_k)_{k\in\N} \notin \ell^1_+ ~~\mathrm{and}~~ (\lambda_k\bnorm{\bepsilon^k})_{k\in\N} \in \ell^1_+ ~, 
\eeq
then the sequence $(\be^k)_{k\in\N}$ converges strongly to $0$, and $(\bz^k)_{k\in\N}$ converges weakly to a point $\bz^\star \in \Fix(\bT)$. 
\item \label{pointwise_bnds2} If
\beq
\label{condition3}
0 < \inf_{k \in \N} \lambda_k \leq \sup_{k \in \N} \lambda_k < \tfrac{1}{\alpha}~~\mathrm{and}~~\big( (k+1)\bnorm{\bepsilon^k} \big)_{k\in\N} \in \ell^1_+ ~,
\eeq
then
%%%%%%
$C_1 = \nu_1\sum_{j \in \N}\lambda_j\bnorm{\bepsilon^j} + \nu_2\overline{\tau}\sum_{\ell \in \N}(\ell+1)\bnorm{\bepsilon^{\ell}} < +\infty$, and 
\[
\label{eq:bound1}
\bnorm{\be^k} \leq \sqrt{\frac{d_0^2 + C_1}{\underline{\tau}(k+1)}};
\]
%%%%%%
\item \label{pointwise_bnds3} If $\frac{1}{2\alpha} \leq \lambda_k \leq \sup_{k \in \N} \lambda_k < \frac{1}{\alpha}$ is non-decreasing, then 
\[
\label{eq:bound2}
\bnorm{\be^k} \leq \sqrt{\frac{d_0^2 + C_2}{\tau_k(k+1)}}.
\]
where $C_2 = \nu_1\sum_{j \in \N}\lambda_j\bnorm{\bepsilon^j} + \nu_2\tau_0\sum_{\ell \in \N}(\ell+1)\bnorm{\bepsilon^{\ell}} < +\infty$.
%%%%%%
\end{enumerate}
\end{theorem}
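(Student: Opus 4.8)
The plan is to distill two scalar recursions from the single vector recursion $\bz^{k+1}=\bT_{\lambda_k}\bz^k+\lambda_k\bepsilon^k$ (with $\bT_{\lambda_k}=\lambda_k\bT+(1-\lambda_k)\bId\in\A(\alpha\lambda_k)$, $\Fix\bT_{\lambda_k}=\Fix\bT$, and $\be^k=(\bId-\bT)\bz^k=\lambda_k^{-1}(\bz^k-\bz^{k+1})+\bepsilon^k$), and then to combine them. The toolkit consists of the two standard characterizations of an $\alpha$-averaged operator $\bS$: for $\bz^\star\in\Fix\bS$, $\bnorm{\bS\bz-\bz^\star}^2\le\bnorm{\bz-\bz^\star}^2-\frac{1-\alpha}{\alpha}\bnorm{(\bId-\bS)\bz}^2$; and, for all $\bx,\by$, $\bnorm{\bS\bx-\bS\by}^2\le\bnorm{\bx-\by}^2-\frac{1-\alpha}{\alpha}\bnorm{(\bId-\bS)\bx-(\bId-\bS)\by}^2$. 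Applying the first with $\bS=\bT_{\lambda_k}$ and $\bz=\bz^k$ (so that $\frac{1-\alpha\lambda_k}{\alpha\lambda_k}\bnorm{(\bId-\bT_{\lambda_k})\bz^k}^2=\tau_k\bnorm{\be^k}^2$, since $(\bId-\bT_{\lambda_k})\bz^k=\lambda_k\be^k$), then bounding $\bnorm{\bz^{k+1}-\bz^\star}\le\bnorm{\bT_{\lambda_k}\bz^k-\bz^\star}+\lambda_k\bnorm{\bepsilon^k}$ and absorbing the cross term via the definition of $\nu_1$, gives the quasi-Fej\'er recursion $\bnorm{\bz^{k+1}-\bz^\star}^2\le\bnorm{\bz^k-\bz^\star}^2-\tau_k\bnorm{\be^k}^2+\nu_1\lambda_k\bnorm{\bepsilon^k}$, and telescoping from $0$ yields $\sum_{j=0}^{k}\tau_j\bnorm{\be^j}^2\le d_0^2+\nu_1\sum_{j\in\N}\lambda_j\bnorm{\bepsilon^j}$.

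The decisive ingredient, and the one I expect to be the main obstacle, is an \emph{inexact} quasi-monotonicity estimate for the residual $\bnorm{\be^k}$ — precisely what lets one pass from an ergodic to a pointwise rate. Applying the second characterization with $\bS=\bT$ to the pair $(\bz^{k+1},\bz^k)$ and substituting $\bz^{k+1}-\bz^k=-\lambda_k(\be^k-\bepsilon^k)$ together with $\bT\bz^{k+1}-\bT\bz^k=(\bz^{k+1}-\bz^k)-(\be^{k+1}-\be^k)$, the terms of order $\lambda_k^2\bnorm{\be^k-\bepsilon^k}^2$ cancel and one is left with $2\lambda_k\bdprod{\be^k-\bepsilon^k}{\be^{k+1}-\be^k}+\frac{1}{\alpha}\bnorm{\be^{k+1}-\be^k}^2\le0$. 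Expanding $\bnorm{\be^{k+1}}^2-\bnorm{\be^k}^2=2\bdprod{\be^k}{\be^{k+1}-\be^k}+\bnorm{\be^{k+1}-\be^k}^2$, inserting the bound just obtained, and using $\lambda_k<1/\alpha$ to discard the resulting nonpositive multiple of $\bnorm{\be^{k+1}-\be^k}^2$, leaves $\bnorm{\be^{k+1}}^2\le\bnorm{\be^k}^2+2\bdprod{\bepsilon^k}{\be^{k+1}-\be^k}\le\bnorm{\be^k}^2+\nu_2\bnorm{\bepsilon^k}$. Iterating from index $j$ up to $k$ gives $\bnorm{\be^k}^2\le\bnorm{\be^j}^2+\nu_2\sum_{\ell=j}^{k-1}\bnorm{\bepsilon^\ell}$; summing over $j=0,\dots,k$ and swapping the order of summation (each $\bnorm{\bepsilon^\ell}$, $\ell<k$, is counted $\ell+1$ times) gives $(k+1)\bnorm{\be^k}^2\le\sum_{j=0}^{k}\bnorm{\be^j}^2+\nu_2\sum_{\ell=0}^{k-1}(\ell+1)\bnorm{\bepsilon^\ell}$ — which is where the weight $\ell+1$ in the hypotheses of (ii)--(iii) originates.

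It then remains to combine the two recursions according to the behaviour of $\tau_k$. For (ii), $0<\inf_k\lambda_k\le\sup_k\lambda_k<1/\alpha$ forces $\underline{\tau}>0$ (the map $\lambda\mapsto\lambda(1/\alpha-\lambda)$ being concave and strictly positive on $]0,1/\alpha[$), so the telescoped bound gives $\sum_{j=0}^{k}\bnorm{\be^j}^2\le\underline{\tau}^{-1}\big(d_0^2+\nu_1\sum_j\lambda_j\bnorm{\bepsilon^j}\big)$; inserting this into the summed quasi-monotonicity inequality, multiplying by $\underline{\tau}$, replacing the factor $\underline{\tau}$ by $\overline{\tau}$ in the error term, and dividing by $k+1$ produces \eqref{eq:bound1}. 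For (iii), $\lambda_k\ge1/(2\alpha)$ non-decreasing makes $\tau_k$ non-increasing, so $\tau_k\sum_{j=0}^{k}\bnorm{\be^j}^2\le\sum_{j=0}^{k}\tau_j\bnorm{\be^j}^2\le d_0^2+\nu_1\sum_j\lambda_j\bnorm{\bepsilon^j}$; inserting this, multiplying by $\tau_k$, replacing $\tau_k$ by $\tau_0$ in the error term, and dividing by $k+1$ gives \eqref{eq:bound2}. Finiteness of $C_1$ and $C_2$ is then immediate: $(\bz^k)$ is bounded (quasi-Fej\'er, Proposition~\ref{T_prop3}), hence so are $(\be^k)$ and, since $\bId-\bT$ is Lipschitz, the increments $\be^{k+1}-\be^k$, so $\nu_1,\nu_2<+\infty$; and $\sum_k\lambda_k\bnorm{\bepsilon^k}\le(\sup_k\lambda_k)\sum_k(k+1)\bnorm{\bepsilon^k}<+\infty$ under the stated error conditions.

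Finally, part (i) follows from softer considerations: Proposition~\ref{T_prop3} gives that $(\bz^k)$ is quasi-Fej\'er with respect to $\Fix\bT\neq\emptyset$, so $(\bnorm{\bz^k-\bz^\star})$ converges for every $\bz^\star\in\Fix\bT$ and, from the telescoped first recursion, $\sum_k\tau_k\bnorm{\be^k}^2<+\infty$; since $(\tau_k)\notin\ell^1_+$, this forces $\liminf_k\bnorm{\be^k}=0$, and the quasi-monotonicity estimate $\bnorm{\be^{k+1}}^2\le\bnorm{\be^k}^2+\nu_2\bnorm{\bepsilon^k}$ together with summability of the errors upgrades this to $\be^k\strong0$. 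Demiclosedness of $\bId-\bT$ at $0$ (valid since $\bT$ is nonexpansive) identifies every weak cluster point of $(\bz^k)$ as a fixed point of $\bT$, and Opial's lemma then gives $\bz^k\weak\bz^\star$ for some $\bz^\star\in\Fix\bT$. The genuinely delicate point throughout is the second paragraph: one must check that the perturbation $\bepsilon^k$, which enters both the step $\bz^{k+1}-\bz^k$ and (through the next iterate) $\be^{k+1}$, neither spoils the cancellation of the $\bnorm{\be^k-\bepsilon^k}^2$ terms nor leaves behind an error worse than a clean multiple of $\bnorm{\bepsilon^k}$ — only then does the final telescoping yield exactly the $(\ell+1)$-weighted series appearing in the statement.
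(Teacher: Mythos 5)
Your treatment of parts (ii) and (iii) is correct and is, in substance, the paper's own proof. The inequality $2\lambda_k\bdprod{\be^k-\bepsilon^k}{\be^{k+1}-\be^k}+\tfrac{1}{\alpha}\bnorm{\be^{k+1}-\be^k}^2\le 0$ that you extract from the averaged-operator characterization of $\bT$ at the pair $(\bz^{k+1},\bz^k)$ is exactly Lemma~\ref{lemma:property_ek} (the paper derives it from firm nonexpansiveness of $\tfrac{1}{2\alpha}(\bId-\bT)$, an equivalent formulation); the resulting quasi-monotonicity $\bnorm{\be^{k+1}}^2\le\bnorm{\be^k}^2+\nu_2\bnorm{\bepsilon^k}$ is Lemma~\ref{lemma:property_ek2}; your quasi-Fej\'er recursion is Lemma~\ref{lemma:property_zk}; and your double summation producing the $(\ell+1)$ weights reproduces \eqref{eq:main_ineq}. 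The only cosmetic difference is the order of operations: the paper multiplies the monotonicity inequality by $\tau_j$ before summing, whereas you sum unweighted and insert $\underline{\tau}$ (resp.\ $\tau_k$) afterwards; the resulting bounds \eqref{eq:bound1} and \eqref{eq:bound2} coincide.

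The one genuine gap is in your part (i). Condition \eqref{condition2} only assumes $(\lambda_k\bnorm{\bepsilon^k})_{k\in\N}\in\ell^1_+$, and since $\lambda_k$ is allowed to tend to $0$ there (only $(\tau_k)_{k\in\N}\notin\ell^1_+$ is required), this does not imply $(\bnorm{\bepsilon^k})_{k\in\N}\in\ell^1_+$: take for instance $\lambda_k\sim 1/k$, so that $\sum_k\tau_k=+\infty$, and $\bnorm{\bepsilon^k}\sim 1/\log^2 k$. Your upgrade from $\liminf_k\bnorm{\be^k}=0$ to $\be^k\strong 0$ rests on iterating $\bnorm{\be^{k+1}}^2\le\bnorm{\be^k}^2+\nu_2\bnorm{\bepsilon^k}$ and invoking summability of the perturbation, which in this regime is exactly what is missing; the argument as written only goes through when $\inf_k\lambda_k>0$ (as in parts (ii)--(iii)). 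The paper sidesteps this by citing \cite[Lemma~5.1]{combettes2004solving} for part (i) rather than reproving it; to make your version self-contained you would need a sharper perturbation bound, e.g.\ controlling $2\bdprod{\bepsilon^k}{\be^{k+1}-\be^k}$ through $\bnorm{\be^{k+1}-\be^k}\le 2\alpha\lambda_k\bnorm{\be^k-\bepsilon^k}$ so that the error term carries the factor $\lambda_k$.
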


In a nutshell, after $k \geq O\big((d_0^2+C_2)/{\epsilon}\big)$ iterations, \eqref{eq:error_fp} achieves the termination criterion $\bnorm{\be^{k}}^2 \leq \epsilon$. %Before proving this theorem, we need several lemmas.\\

\newpage

Denote now $\Lambda_k=\sum_{j=0}^k\lambda_{j}$, and define $\bar{\be}^k=\frac{1}{\Lambda_k}\sum_{j=0}^k\lambda_{j}\be^j$. 
We have the following theorem.

%%%%%%%%%%%%%%%%%%%%%%%%%%%%%%%%%%%%%%%%%%%%%%%%%%%%%%%%%%%%%%%%%
\begin{theorem}[Ergodic iteration-complexity bound of \eqref{eq:error_fp}]
\label{thm:ergodic_irfpi_bound}
If $\lambda_k \in ]0, 1[$ and $C_3 = \sum_{j=0}^{+\infty}\lambda_j\bnorm{\bepsilon^j} < +\infty$, then
\begin{equation*}
\bnorm{\bar{\be}^k} \leq 2\big(d_0 + C_3\big)/{\Lambda_k}.
\end{equation*}
%%%%%%
\end{theorem}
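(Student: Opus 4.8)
The plan is to exploit the \emph{telescoping} structure already hidden in the definition~\eqref{eq:ek} of the residual. Since $\lambda_j\be^j = (\bz^j - \bz^{j+1}) + \lambda_j\bepsilon^j$, summing over $j = 0,\dots,k$ collapses the first terms:
\begin{equation*}
\sum_{j=0}^k \lambda_j\be^j = \bz^0 - \bz^{k+1} + \sum_{j=0}^k \lambda_j\bepsilon^j .
\end{equation*}
Dividing by $\Lambda_k$, recalling $\bar{\be}^k = \frac{1}{\Lambda_k}\sum_{j=0}^k\lambda_j\be^j$, and using the triangle inequality together with $C_3 = \sum_j\lambda_j\bnorm{\bepsilon^j} < +\infty$, I obtain
\begin{equation*}
\bnorm{\bar{\be}^k} \leq \frac{1}{\Lambda_k}\big( \bnorm{\bz^0 - \bz^{k+1}} + C_3 \big),
\end{equation*}
so the whole statement reduces to a \emph{uniform} bound on $\bnorm{\bz^0-\bz^{k+1}}$.

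To get that bound, I would fix $\bz^\star\in\Fix\bT$ (nonempty by Proposition~\ref{T_prop3}) and use $\bz^{k+1} = \bT_k\bz^k + \lambda_k\bepsilon^k$ with $\bT_k = \lambda_k\bT + (1-\lambda_k)\bId$. Since $\lambda_k\in]0,1[$ we have $\bT_k\in\A(\alpha\lambda_k)$, in particular $\bT_k$ is nonexpansive, and $\Fix\bT_k = \Fix\bT$, so $\bT_k\bz^\star = \bz^\star$. Hence
\begin{equation*}
\bnorm{\bz^{k+1}-\bz^\star} \leq \bnorm{\bT_k\bz^k - \bT_k\bz^\star} + \lambda_k\bnorm{\bepsilon^k} \leq \bnorm{\bz^k-\bz^\star} + \lambda_k\bnorm{\bepsilon^k},
\end{equation*}
and a one-line induction gives $\bnorm{\bz^{k+1}-\bz^\star} \leq d_0 + \sum_{j=0}^k\lambda_j\bnorm{\bepsilon^j} \leq d_0 + C_3$. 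This is just the quasi-Fej\'er monotonicity of Proposition~\ref{T_prop3} made quantitative via the summability assumption $C_3 < +\infty$.

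Combining the two estimates through $\bnorm{\bz^0-\bz^{k+1}} \leq \bnorm{\bz^0-\bz^\star} + \bnorm{\bz^{k+1}-\bz^\star} \leq d_0 + (d_0+C_3)$ then yields $\bnorm{\bar{\be}^k} \leq (2d_0 + 2C_3)/\Lambda_k = 2(d_0+C_3)/\Lambda_k$, which is the claim. I do not expect any genuine obstacle here: the only step requiring a little care is the uniform boundedness of $(\bz^k)_{k\in\N}$, handled by nonexpansiveness of $\bT_k$ plus summability of $(\lambda_k\bnorm{\bepsilon^k})_{k\in\N}$; beyond that, convexity of $\bnorm{\cdot}$ is not even needed, since I only pass the norm inside the finite weighted average defining $\bar{\be}^k$ by the triangle inequality.
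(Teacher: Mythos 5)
Your proposal is correct and follows essentially the same route as the paper: telescoping $\lambda_j\be^j = (\bz^j-\bz^{j+1})+\lambda_j\bepsilon^j$ to reduce the bound on $\bnorm{\bar{\be}^k}$ to a bound on $\bnorm{\bz^0-\bz^{k+1}}$, which is then controlled by nonexpansiveness of $\bT_k$ and summability of $(\lambda_k\bnorm{\bepsilon^k})_k$. You merely spell out the steps (the telescoping identity and the induction) that the paper leaves implicit.
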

If $\inf_k \lambda_k > 0$, then we get the iteration-complexity $O(1/\sqrt{\epsilon})$ in ergodic sense for \eqref{eq:error_fp}.

%%%%%%%%%%%%%%%%%%%%%%%%%%%%%%%%%%%%%%%%%%%%%%%%%%%%%%%%%%%%%%%%%

\subsection{Iteration complexity bounds of \eqref{eq:mnticln}}
%%%%%%%%%%%%%%%%%%%%%%%%%%%%%%%%%%%

We now turn to the complexity bounds of the GFB applied to solve \eqref{eq:mnticln} (or equivalently \eqref{eq:minPhi}). 

From the quantities used in Algorithm \ref{alg:GFB}, let's denote $\bu^{k+1}=(u_i^{k+1})_i=\big( \prox_{\frac{\gamma}{\omega_i}h_i} ( 2x^k-z_i^k-\gamma \nabla f(x^k)) \big)_i,~\varepsilon_i^k=v_i^{k+1}-u_i^{k+1}$, then $e_i^k = x^k-u_i^{k+1}=(z_i^k-z_i^{k+1})/{\lambda_k}+\varepsilon_i^k$, and ${g^k=\frac{1}{\gamma}x^{k} - \nabla f(x^{k}) - \frac{1}{\gamma}(\ssum_i\omega_iu_i^{k+1})}$. To save space, we only consider the case where $\lambda_k\in[\frac{1}{2\alpha},\frac{1}{\alpha}[$ is non-decreasing.

%%%%%%%%%%%%%%%%%%%%%%%%%%%%%%%%%%%%%%%%%%%%%%%%%%%%%%%%%%%%%%%%%
\begin{theorem}[Pointwise iteration-complexity bound of \eqref{eq:mnticln}]
\label{thm:pointwise_bnd_monotone_inclusion}
We have $g^k \in \sum_i \partial h_i(u_i^{k+1})$. Moreover, under the assumptions of Theorem \ref{thm:pointwise_irfpi_bounds},
\begin{equation*}
\norm{g^k+\nabla f(\ssum_i\omega_iu_i^{k+1})} \leq \frac{1}{\gamma}\sqrt{\frac{d_0^2 + C_2}{\tau_k(k+1)}}.
\end{equation*}
\end{theorem}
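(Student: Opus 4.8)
The plan is to carry out the argument in two stages. First I would establish the membership $g^k \in \sum_i \partial h_i(u_i^{k+1})$ purely by unwinding the definition of the proximity operator. Recall that $u_i^{k+1} = \prox_{\frac{\gamma}{\omega_i}h_i}\big(2x^k - z_i^k - \gamma\nabla f(x^k)\big)$, so the characterization of the prox gives $\frac{\omega_i}{\gamma}\big(2x^k - z_i^k - \gamma\nabla f(x^k) - u_i^{k+1}\big) \in \partial h_i(u_i^{k+1})$. I would then multiply by $\gamma/\omega_i$ where needed, sum against the weights $\omega_i$, and recall $x^k = \sum_i \omega_i z_i^k$; the terms involving $z_i^k$ telescope against $x^k$, the $\nabla f(x^k)$ terms combine, and one is left with $\frac{1}{\gamma}x^k - \nabla f(x^k) - \frac{1}{\gamma}\big(\sum_i \omega_i u_i^{k+1}\big) \in \sum_i \partial h_i(u_i^{k+1})$, which is exactly $g^k$. (Strictly, $\sum_i \partial h_i(u_i^{k+1})$ here is the sum over $i$ of subgradients of $h_i$ evaluated at the respective points $u_i^{k+1}$, one subgradient per index, consistent with the notation used in the theorem.)

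Second, I would quantify $\norm{g^k + \nabla f(\sum_i \omega_i u_i^{k+1})}$. By the definition of $g^k$,
\[
g^k + \nabla f\big({\msum}_i\omega_i u_i^{k+1}\big) = \tfrac{1}{\gamma}\big(x^k - {\msum}_i\omega_i u_i^{k+1}\big) - \nabla f(x^k) + \nabla f\big({\msum}_i\omega_i u_i^{k+1}\big).
\]
The key observation is that $\sum_i \omega_i u_i^{k+1}$ is precisely the first component of $\bP_{\bS}\bu^{k+1}$ (or equivalently that the vector $\bC(\sum_i\omega_i u_i^{k+1})$ agrees with the "diagonal" part of the iteration), and that in the noiseless book-keeping the residual $\be^k = (\bId - \bT)\bz^k$ has components $e_i^k = x^k - u_i^{k+1}$. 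One should check that because $\bT_{2,\gamma} = \bId - \gamma\bB\bP_{\bS}$ feeds the \emph{same} point $x^k$ into every $\nabla f$ and into the prox argument, the terms $-\nabla f(x^k) + \nabla f(\sum_i\omega_i u_i^{k+1})$ either cancel exactly against a matching contribution hidden in $\tfrac1\gamma(x^k - \sum_i\omega_i u_i^{k+1})$, or are absorbed so that $g^k + \nabla f(\sum_i\omega_i u_i^{k+1}) = \tfrac1\gamma\,\bP_{\bS}\be^k$ in the $\bS$-component. Hence $\norm{g^k + \nabla f(\sum_i\omega_i u_i^{k+1})} = \tfrac{1}{\gamma}\bnorm{\bP_{\bS}\be^k} \leq \tfrac{1}{\gamma}\bnorm{\be^k}$, since $\bP_{\bS}$ is a norm-nonexpansive projection and the $\bS$-norm of $\bC(w)$ equals $\norm{w}$.

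Finally I would invoke Theorem~\ref{pointwise_bnds3}, which under the stated hypothesis $\lambda_k \in [\frac{1}{2\alpha},\frac{1}{\alpha}[$ non-decreasing gives $\bnorm{\be^k} \leq \sqrt{(d_0^2 + C_2)/(\tau_k(k+1))}$, and combine it with the bound from the previous paragraph to obtain the claim. The main obstacle I anticipate is the second stage: correctly tracking how $\nabla f(x^k)$ versus $\nabla f(\sum_i\omega_i u_i^{k+1})$ enters, and verifying the precise identity $g^k + \nabla f(\sum_i\omega_i u_i^{k+1}) = \tfrac1\gamma\bP_{\bS}\be^k$ — i.e.\ showing the gradient discrepancy is not an extra error term but is exactly what the residual $\be^k$ already measures. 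This requires care with the composition order $\bT_{1,\bgamma}\circ\bT_{2,\gamma}$ and with the fact that $\prox$ and $\bP_{\bS}$ act componentwise while $\bB\bP_{\bS}$ evaluates $\nabla f$ at the averaged point; once that identity is pinned down, the rest is a direct substitution of the pointwise bound of Theorem~\ref{thm:pointwise_irfpi_bounds}.
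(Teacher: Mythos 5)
Your first stage is sound and matches the paper: the prox characterization gives $\frac{\omega_i}{\gamma}\big(2x^k-z_i^k-\gamma\nabla f(x^k)-u_i^{k+1}\big)\in\partial h_i(u_i^{k+1})$, and simply summing these over $i$ (the weights are already built into the subgradients, so no extra multiplication by $\gamma/\omega_i$ is needed) together with $x^k=\sum_i\omega_i z_i^k$ yields exactly $g^k\in\sum_i\partial h_i(u_i^{k+1})$.

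The second stage has a genuine gap. The identity you are banking on, $g^k+\nabla f(\msum_i\omega_iu_i^{k+1})=\tfrac{1}{\gamma}\bP_{\bS}\be^k$, is false in general: by your own display,
\begin{equation*}
g^k+\nabla f\big({\msum}_i\omega_iu_i^{k+1}\big)=\tfrac{1}{\gamma}\big(x^k-{\msum}_i\omega_iu_i^{k+1}\big)-\nabla f(x^k)+\nabla f\big({\msum}_i\omega_iu_i^{k+1}\big),
\end{equation*}
and the gradient discrepancy $\nabla f(\msum_i\omega_iu_i^{k+1})-\nabla f(x^k)$ does not cancel against anything hidden in the first term; it would vanish only if $\nabla f$ took the same value at the two points. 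The correct move --- and the paper's --- is not a cancellation but a regrouping followed by a nonexpansiveness bound: write
\begin{equation*}
g^k+\nabla f\big({\msum}_i\omega_iu_i^{k+1}\big)=\tfrac{1}{\gamma}\Big[(\Id-\gamma\nabla f)(x^k)-(\Id-\gamma\nabla f)\big({\msum}_i\omega_iu_i^{k+1}\big)\Big],
\end{equation*}
and invoke Lemma~\ref{lemma:prop_subdiff} (Baillon--Haddad): for $\gamma\in]0,2\beta[$ one has $\Id-\gamma\nabla f\in\A(\tfrac{\gamma}{2\beta})$, hence nonexpansive, so the norm is at most $\tfrac{1}{\gamma}\norm{x^k-\msum_i\omega_iu_i^{k+1}}$. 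From there your reasoning is fine: $\norm{\msum_i\omega_i(x^k-u_i^{k+1})}\leq\bnorm{\bx^k-\bu^{k+1}}=\bnorm{\be^k}$ (Jensen, i.e.\ nonexpansiveness of $\bP_{\bS}$), and Theorem~\ref{pointwise_bnds3} finishes the argument. In short, the gradient discrepancy is an extra term that is \emph{controlled by} the residual through nonexpansiveness of the forward operator; it is not literally ``what the residual already measures,'' and without Lemma~\ref{lemma:prop_subdiff} your chain of inequalities does not close.
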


Let now $\bar{u}_i^k = \frac{1}{\Lambda_k}\sum_{j=0}^k \lambda_k u_i^{j+1}$, $\bar{x}^k = \frac{1}{\Lambda_k}\sum_{j=0}^k \lambda_j x^j$ and $\bar{g}^k=\frac{1}{\gamma}\bar{x}^{k} - \nabla f(\bar{x}^{k}) - \frac{1}{\gamma}(\ssum_i\omega_i\bar{u}_i^{k})$. We get the following.

%%%%%%%%%%%%%%%%%%%%%%%%%%%%%%%%%%%%%%%%%%%%%%%%%%%%%%%%%%%%%%%%%
\begin{theorem}[Ergodic iteration-complexity bound of \eqref{eq:mnticln}]
\label{thm:ergodic_bnd_monotone_inclusion}
Under the assumptions of Theorem~\ref{thm:ergodic_irfpi_bound}, we have
\begin{equation*}
\norm{\bar{g}^k+\nabla f(\ssum_i\omega_i\bar{u}_i^{k})} \leq {2(d_0+C_3)}/{(\gamma\Lambda_k)}.
\end{equation*}
%where $C_3$ is the same in Theorem \ref{thm:ergodic_irfpi_bound}.
\end{theorem}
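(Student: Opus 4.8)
The plan is to reduce the ergodic bound for the monotone inclusion to the abstract ergodic bound of Theorem~\ref{thm:ergodic_irfpi_bound}, by carefully unpacking the definitions of $\bu^{k+1}$, $g^k$ and $\be^k$ in the product space $\bH$. First I would recall that $\be^k = (\bId - \bT)\bz^k$ lives in $\bH$, so its coordinates are exactly the $e_i^k = x^k - u_i^{k+1}$ introduced just before Theorem~\ref{thm:pointwise_bnd_monotone_inclusion}; hence $\bar{\be}^k$ has coordinates $\bar{e}_i^k = \bar{x}^k - \bar{u}_i^k$, using linearity of the averaging and the fact that $\Lambda_k$ and the weights $\lambda_j$ are common to all coordinates. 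The key algebraic observation is that the quantity $\gamma(g^k + \nabla f(\msum_i \omega_i u_i^{k+1}))$ equals $x^k - \msum_i\omega_i u_i^{k+1} = \msum_i \omega_i(x^k - u_i^{k+1}) = \msum_i \omega_i e_i^k = \bP_{\bS}$-type projection of $\be^k$; more precisely $\msum_i\omega_i e_i^k = \bdprod{\be^k}{\bC(\cdot)}$-represented vector, so its $\H$-norm is controlled by $\bnorm{\be^k}$ via the Cauchy--Schwarz / Jensen inequality $\norm{\msum_i\omega_i e_i^k} \le \msum_i\omega_i\norm{e_i^k} \le (\msum_i\omega_i\norm{e_i^k}^2)^{1/2} = \bnorm{\be^k}$, since the $\omega_i$ sum to one.

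The same identity applied to the averaged quantities gives $\gamma(\bar{g}^k + \nabla f(\msum_i\omega_i\bar{u}_i^k)) = \bar{x}^k - \msum_i\omega_i\bar{u}_i^k = \msum_i\omega_i\bar{e}_i^k$, and therefore $\norm{\bar{g}^k + \nabla f(\msum_i\omega_i\bar{u}_i^k)} = \tfrac{1}{\gamma}\norm{\msum_i\omega_i\bar{e}_i^k} \le \tfrac{1}{\gamma}\bnorm{\bar{\be}^k}$. At this point I would invoke Theorem~\ref{thm:ergodic_irfpi_bound} directly: its hypotheses ($\lambda_k\in]0,1[$ and $C_3 = \sum_j\lambda_j\bnorm{\bepsilon^j} < +\infty$) are precisely the "assumptions of Theorem~\ref{thm:ergodic_irfpi_bound}" referenced in the statement, so $\bnorm{\bar{\be}^k} \le 2(d_0+C_3)/\Lambda_k$, and combining with the previous display yields the claimed bound $\norm{\bar{g}^k + \nabla f(\msum_i\omega_i\bar{u}_i^k)} \le 2(d_0+C_3)/(\gamma\Lambda_k)$.

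There is a small bookkeeping point that needs care: the definition of $\bar{u}_i^k$ in the paper reads $\frac{1}{\Lambda_k}\sum_{j=0}^k\lambda_k u_i^{j+1}$, which appears to be a typo for $\frac{1}{\Lambda_k}\sum_{j=0}^k\lambda_j u_i^{j+1}$; I would use the latter so that $\bar{u}_i^k$ is a genuine convex combination with the same weights $\lambda_j/\Lambda_k$ used to define $\bar{\be}^k$ and $\bar{x}^k$, which is what makes the coordinatewise identification $\bar{e}_i^k = \bar{x}^k - \bar{u}_i^k$ valid. The only other thing to check is that the residual $\be^k = (\bId-\bT)\bz^k$ from \eqref{eq:ek} indeed has the coordinate expression $(x^k - u_i^{k+1})_i$; this follows by writing out $\bT = \bT_{1,\bgamma}\circ\bT_{2,\gamma}$ on $\bz^k$ exactly as in the derivation preceding Theorem~\ref{thm:pointwise_bnd_monotone_inclusion} (where $e_i^k = x^k - u_i^{k+1}$ is introduced), and by the equivalence in Proposition~\ref{T_prop3} between the inexact GFB iteration and \eqref{eq:error_fp}.

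The main obstacle, such as it is, is not analytic but notational: making absolutely sure the averaging weights are consistent across $\bar{\be}^k$, $\bar{x}^k$ and $\bar{u}_i^k$ so that the linear map $\bz \mapsto \msum_i\omega_i e_i$ commutes with the Cesàro averaging, and that the fix of the apparent typo in the definition of $\bar{u}_i^k$ is the intended one. Once that is settled, everything reduces to the two elementary inequalities $\norm{\msum_i\omega_i v_i} \le \bnorm{\bv}$ (Jensen, using $\msum_i\omega_i=1$) and the already-proven abstract ergodic estimate, with no further estimation needed.
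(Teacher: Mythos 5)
Your overall route is the same as the paper's: identify the coordinates of $\be^k$ with $e_i^k = x^k - u_i^{k+1}$, pass to the Ces\`aro averages by linearity, control $\norm{\msum_i\omega_i \bar e_i^k}$ by $\bnorm{\bar\be^k}$ via Jensen, and invoke Theorem~\ref{thm:ergodic_irfpi_bound}. However, there is one genuine error at the central step. You assert the \emph{equality}
\begin{equation*}
\gamma\big(g^k + \nabla f(\msum_i\omega_i u_i^{k+1})\big) = x^k - \msum_i\omega_i u_i^{k+1},
\end{equation*}
and likewise for the barred quantities. This is false in general: from the definition $g^k = \tfrac{1}{\gamma}x^k - \nabla f(x^k) - \tfrac{1}{\gamma}(\msum_i\omega_i u_i^{k+1})$ one gets
\begin{equation*}
\gamma\big(g^k + \nabla f(\msum_i\omega_i u_i^{k+1})\big) = (\Id-\gamma\nabla f)(x^k) - (\Id-\gamma\nabla f)(\msum_i\omega_i u_i^{k+1}),
\end{equation*}
which carries the extra term $-\gamma\big(\nabla f(x^k) - \nabla f(\msum_i\omega_i u_i^{k+1})\big)$; it equals $x^k - \msum_i\omega_i u_i^{k+1}$ only if $\nabla f$ takes the same value at both points.

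The inequality you need, $\norm{g^k + \nabla f(\msum_i\omega_i u_i^{k+1})} \le \gamma^{-1}\norm{x^k - \msum_i\omega_i u_i^{k+1}}$, is still true, but it requires the nonexpansiveness of $\Id - \gamma\nabla f$, which is exactly where the hypothesis $\gamma\in]0,2\beta[$ and the Baillon--Haddad theorem enter (Lemma~\ref{lemma:prop_subdiff}: $\Id-\gamma\nabla f\in\A(\tfrac{\gamma}{2\beta})$, hence nonexpansive). This is the one nontrivial analytic ingredient of the proof and your argument as written skips it by asserting a false identity. Once that step is repaired, the rest of your proposal (the coordinatewise identification of $\be^k$, the consistency of the averaging weights, the typo fix $\lambda_k\to\lambda_j$ in $\bar u_i^k$, the Jensen bound, and the appeal to Theorem~\ref{thm:ergodic_irfpi_bound}) is correct and coincides with the paper's proof.
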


%-------------------------------------------------------------------------------------------
\section{Numerical experiments}
\label{sct:num_expmt}

As an illustrative example, in this section, we consider the principal component pursuit (PCP) problem, and apply it to decompose a video sequence into its background and foreground components. The rationale behind this is that since the background is virtually the same in all frames, if the latter are stacked as columns of a matrix, it is likely to be low-rank (even of rank 1 for perfectly constant background). On the other hand, moving objects appear occasionally on each frame and occupy only a small fraction of it. Thus the corresponding component would be sparse.

%%%%%%%%%%%%%%%%%%%%%%%%%%%%%%%%%%%%%%%%%%%%%%%%%%%%%%%%%%%%%%%%%
%\subsection{Principal component pursuit}
%\label{subsct:rpca}
Assume that a matrix real $M$ can be written as
\begin{equation*}
\label{eq:LR_Sparse}
M = X_{L,0} + X_{S,0} + N,
\end{equation*}
where a $X_{L,0}$ is low-rank, $X_{S,0}$ is sparse and $N$ is a perturbation matrix that accounts for model imperfection.
The PCP proposed in \cite{candes2011robust} attempts to provably recover $(X_{L,0},X_{S,0})$, to a good approximation, by solving a convex optimization. Here, toward an application to video decomposition, we also add a non-negativity constraint to the low-rank component, which leads to the convex problem
\[
\label{eq:rpca}
\min_{X_L, X_S}~ \tfrac{1}{2}\norm{M-X_L-X_S}_F^2+\mu_1\norm{X_S}_1 + \mu_2\norm{X_L}_\ast+\iota_{P_+}(X_L),
\]
where $\norm{\cdot}_F$ is the Frobenius norm, $\norm{\cdot}_\ast$ stands for the nuclear norm, and $\iota_{P_+}$ is the indicator function of the nonnegative orthant. 

One can observe that for fixed $X_L$, the minimizer of \eqref{eq:rpca} is $X_S^\star=\prox_{\mu_1{\norm{\cdot}_1}}(M - X_L)$. Thus, \eqref{eq:rpca} is equivalent to 
\[
\label{eq:rpcame}
\min_{X_L}~^1({\mu_1\norm{\cdot}_1})(M-X_L) + \mu_2\norm{X_L}_\star+\iota_{P_+}(X_L),
\]
where $^1({\mu_1\norm{\cdot}_1})(M-X_L) = \min_{Z} \frac{1}{2}\norm{M-X_L-Z}_F^2+\mu_1\norm{Z}_1$ is the Moreau Envelope of $\mu_1{\norm{\cdot}_1}$ of index 1. Since the Moreau envelope is differentiable with a 1-Lipschitz continuous gradient \cite{moreau1962decomposition}, \eqref{eq:rpcame} is a special instance of \eqref{eq:minPhi} and can be solved using Algorithm~\ref{alg:GFB}. Fig.~\ref{fig:frames} shows the recovered components for a video example. Fig.~\ref{fig:gx} displays the observed pointwise and ergodic rates and those predicted by Theorem~\ref{thm:pointwise_bnd_monotone_inclusion} and \ref{thm:ergodic_bnd_monotone_inclusion}.

%%%%%%%%%%%%%%%%%%%%%%%%%%%%%%%%%%%%%%%%%%%%%%%%%%%%%%%%%%%%%%%%%%

\begin{figure}[!htb]
  \centering
  \includegraphics[width=0.495\linewidth]{./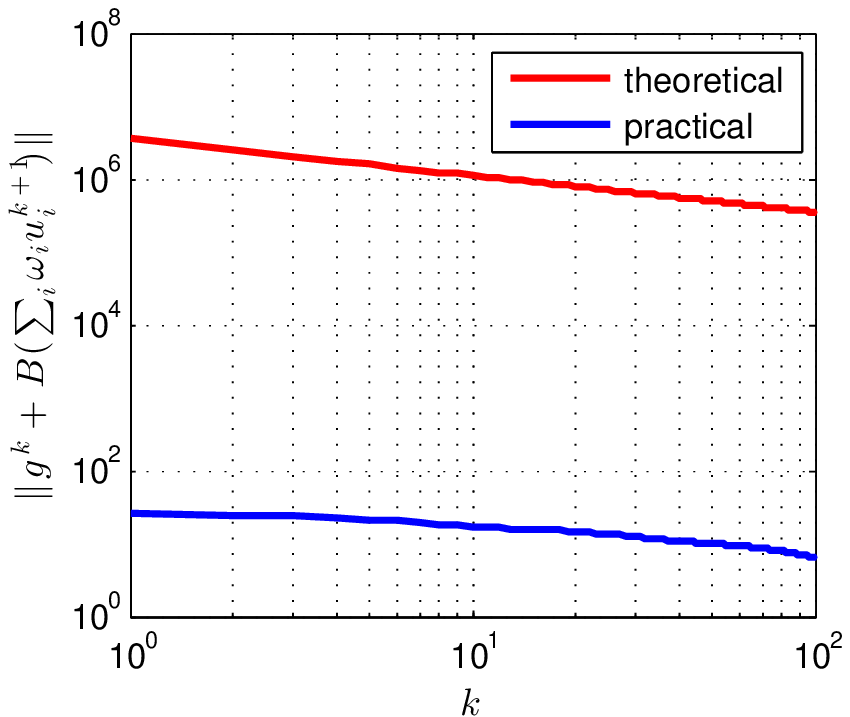}
  \includegraphics[width=0.495\linewidth]{./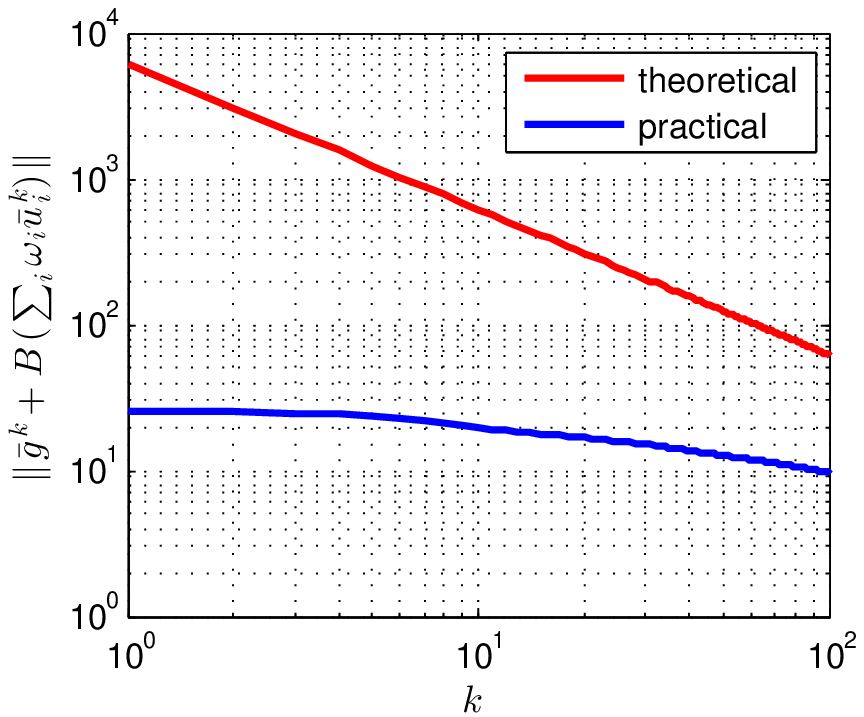}
\caption{Observed rates and theoretical bounds for the GFB applied to the PCP problem.}
\label{fig:gx}
\end{figure}

%%%%%%%%%%%%%%%%%%%%%%%%%%%%%%%%%%%%%%%%%%%%%%%%%%%%%%%%%%%%%%%%%%
%%%%%% video frames
\begin{figure}[!htb]
\centering
\begin{tabular}{@{\hspace{0pt}}c@{\hspace{0pt}}c@{\hspace{0pt}}c}
%%%%%%%%%%%%
\includegraphics[width=0.33\linewidth]{./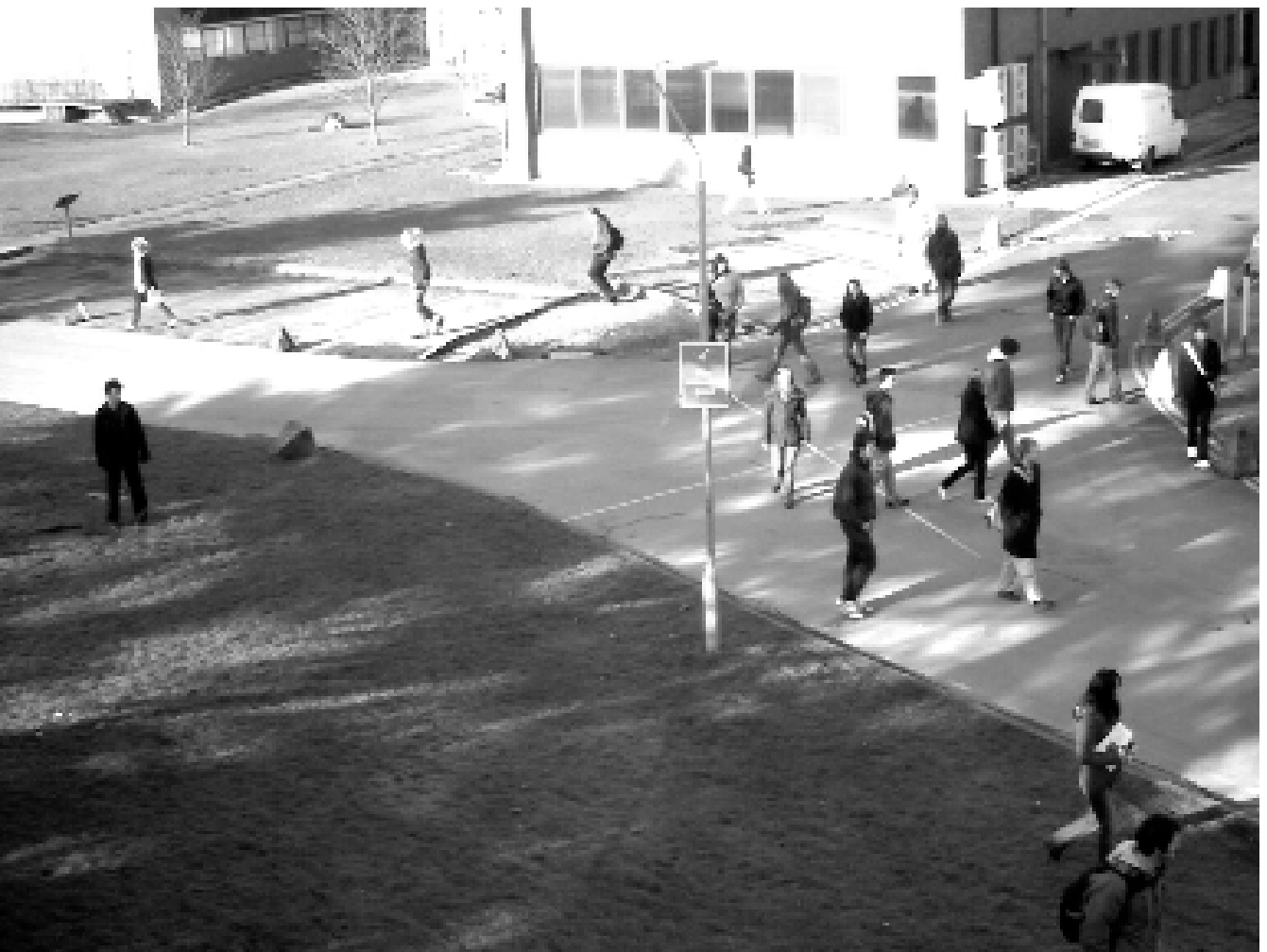} &
\includegraphics[width=0.33\linewidth]{./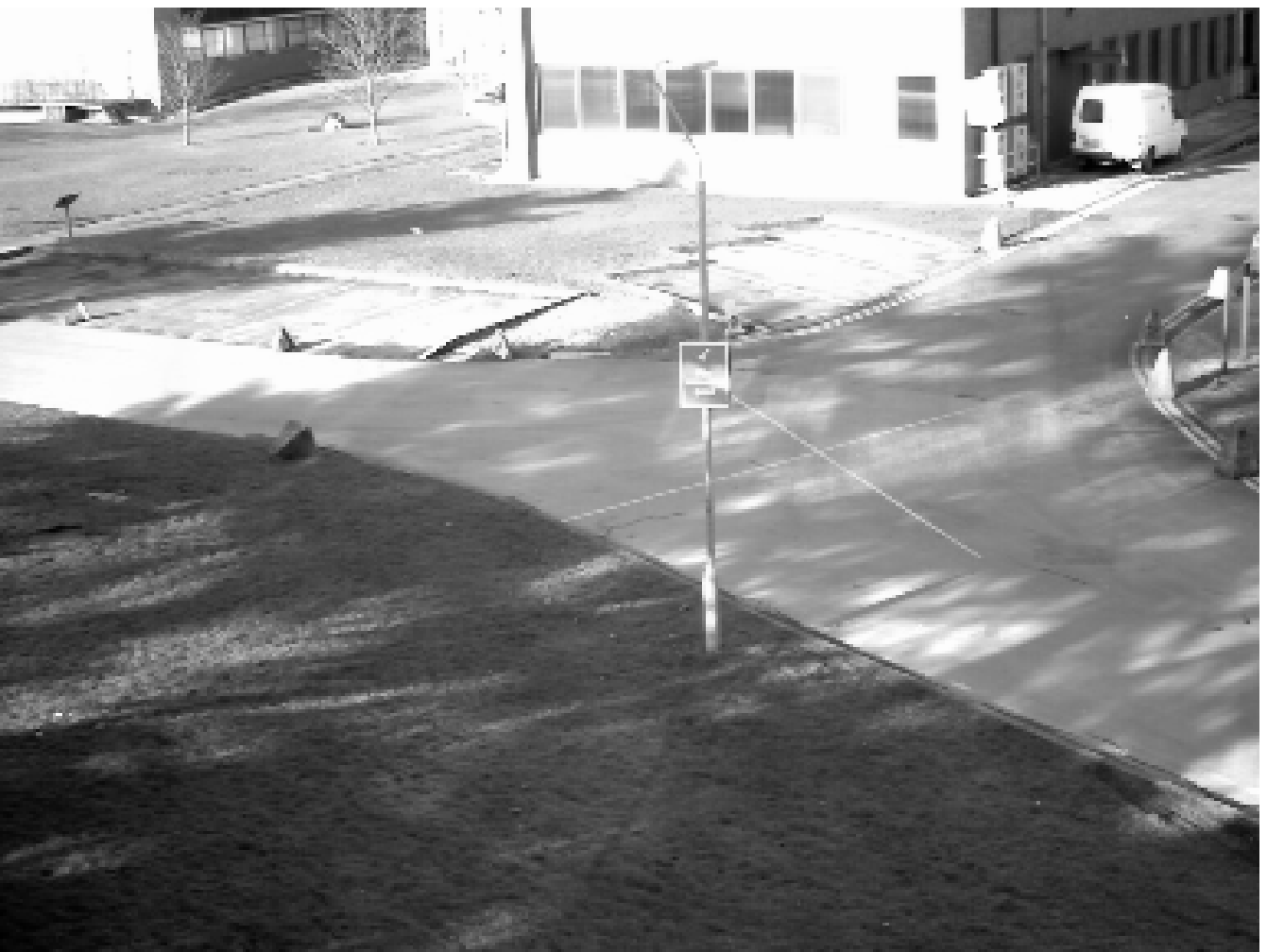} &
\includegraphics[width=0.33\linewidth]{./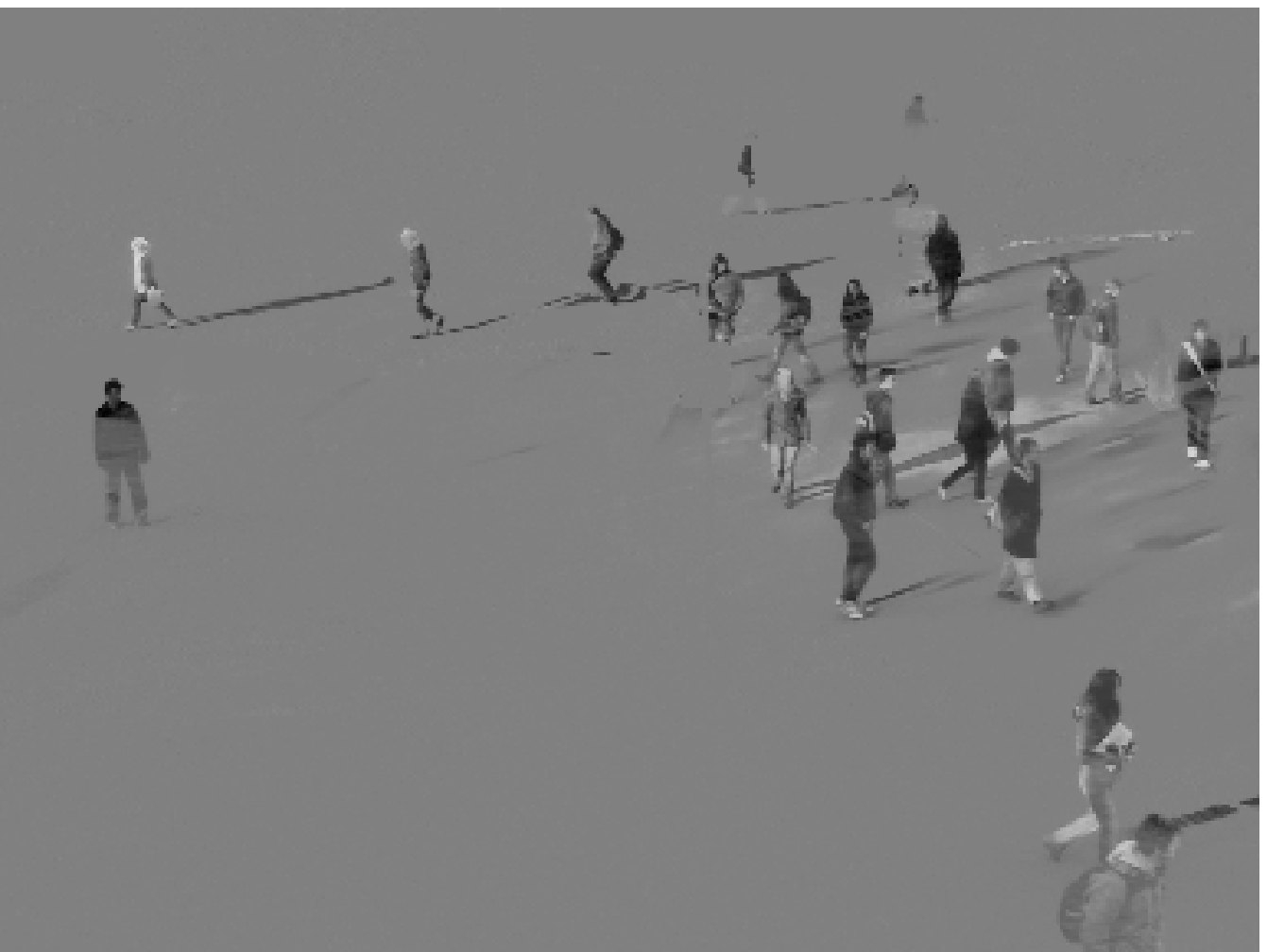} \\
%%%%%%%%%%%%
\includegraphics[width=0.33\linewidth]{./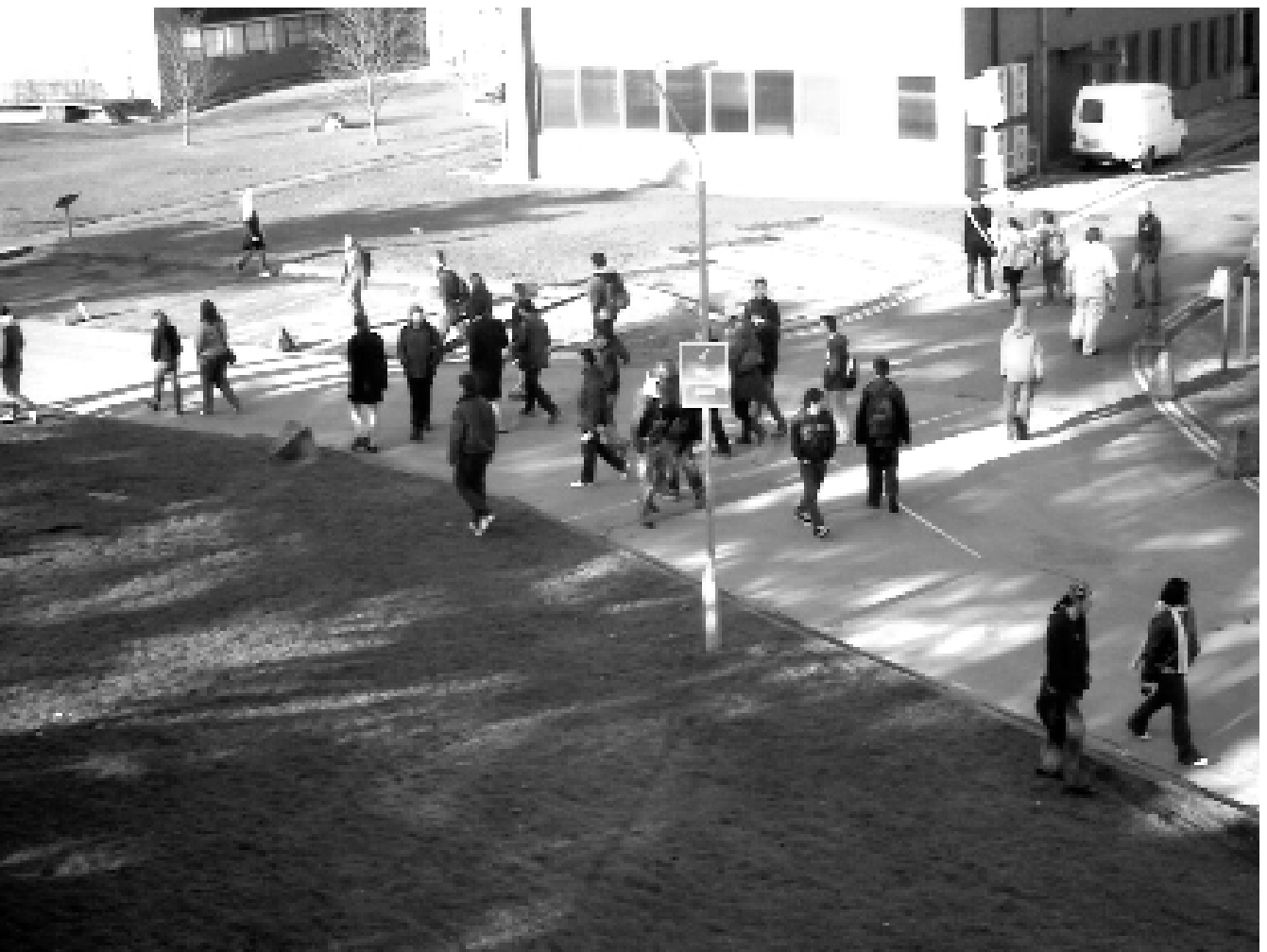} &
\includegraphics[width=0.33\linewidth]{./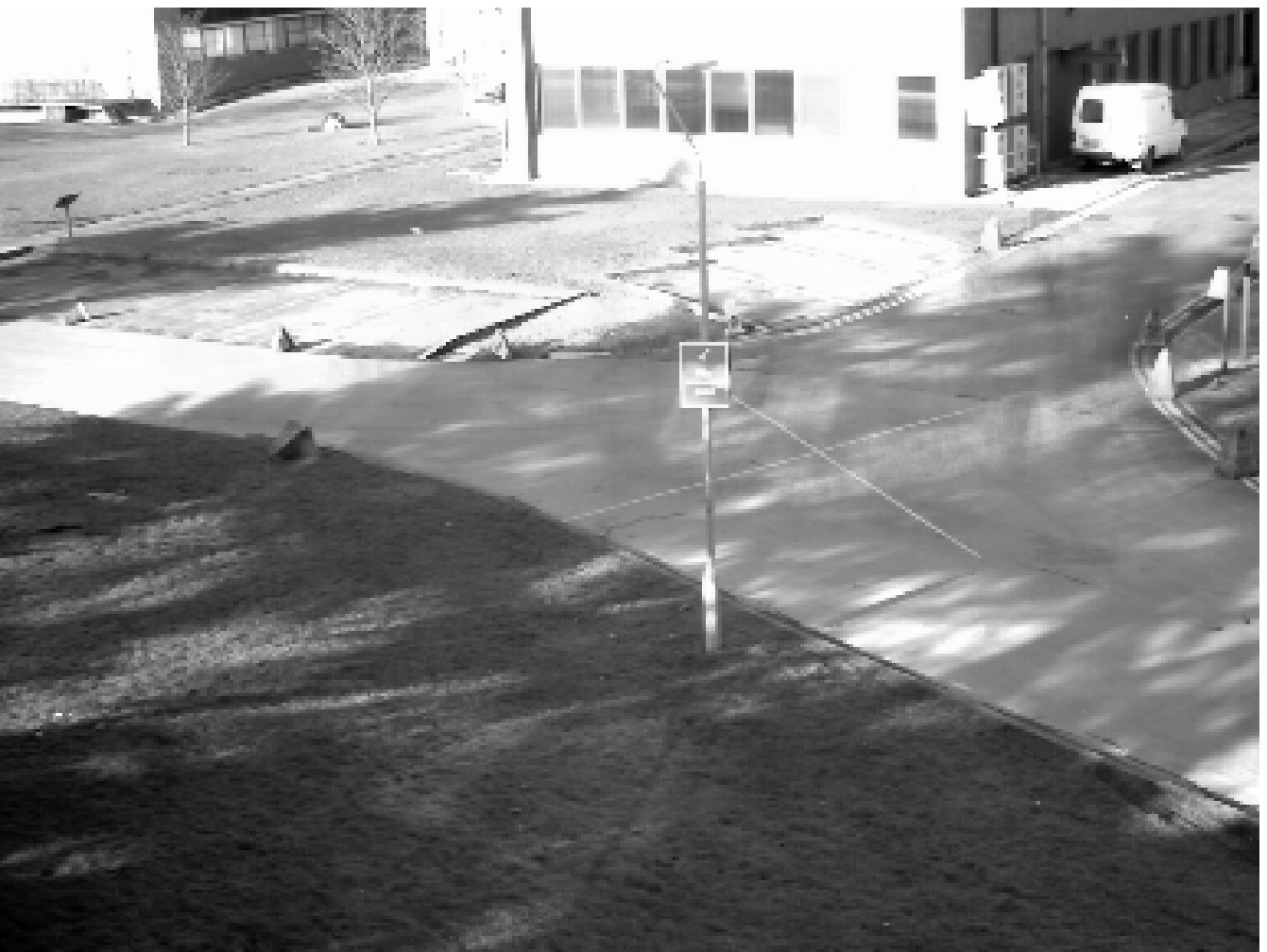} &
\includegraphics[width=0.33\linewidth]{./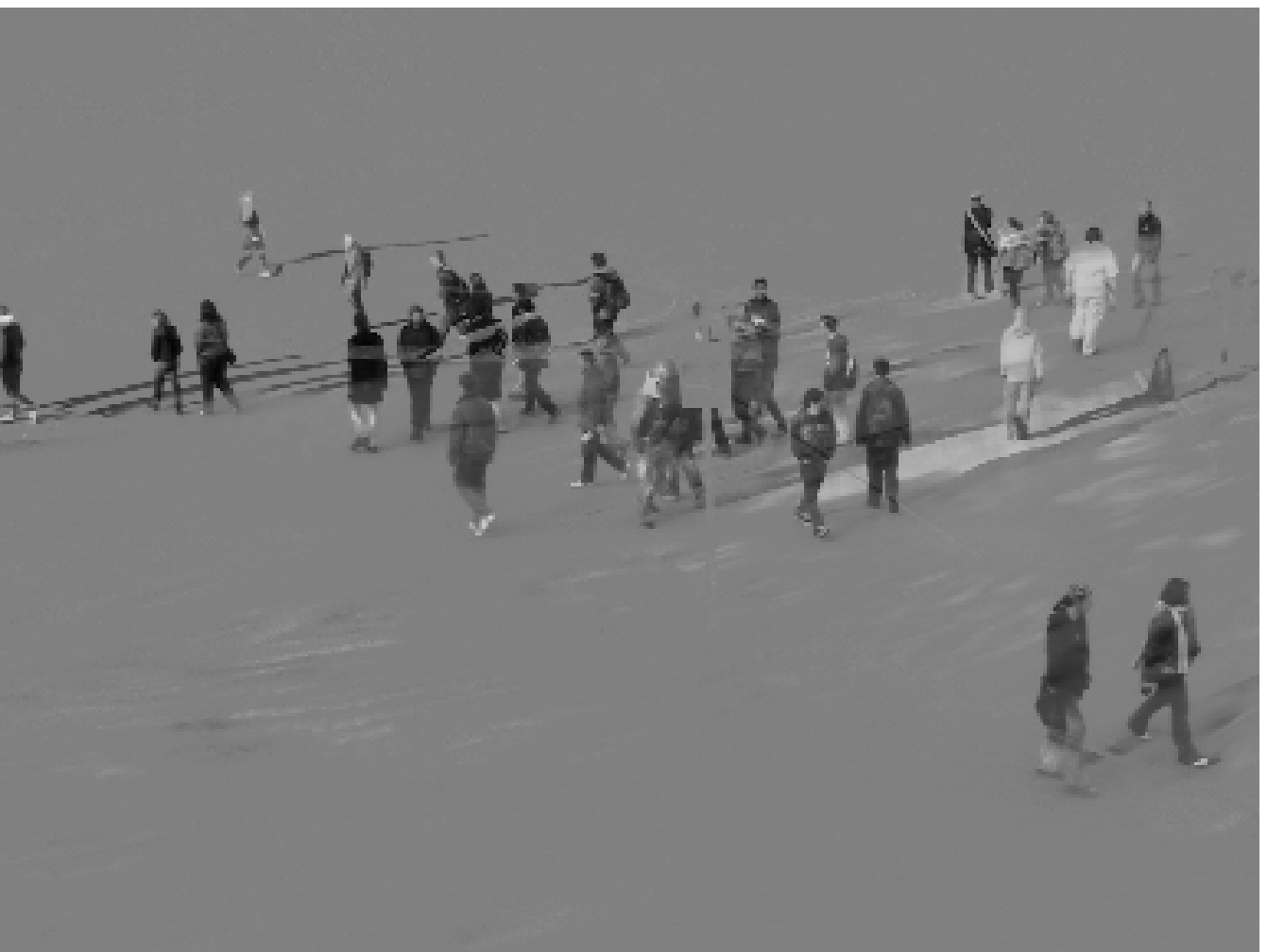} \\
\end{tabular}
\caption{Left: Original frames of a video (436 $384\times288$-pixel frames). Middle and Right: recovered background (low-rank) and foreground (sparse) components.}
\label{fig:frames}
\end{figure}

%%%%%%%%%%%%%%%%%%%%%%%%%%%%%%%%%%%%%%%%%%%%%%%%%%%%%%%%%%%%%%%%%
%%%%%%%%%%%%%%%%%%%%%%%%%%%%%%%%%%%%%%%%%%%%%%%%%%%%%%%%%%%%%%%%%
%%%%%%%%%%%%%%%%%%%%%%%%%%%%%%%%%%%%%%%%%%%%%%%%%%%%%%%%%%%%%%%%%
\section{Proofs}
\label{sec:proofs}

\subsection{Preparatory lemmata}
%%%%%%%%%%%%%%%%%%%%%%%%%%%%%%%%%%%%
\begin{lemma}
\label{lemma:id_t}
Let $T\in\A(\alpha)$, then $\frac{1}{2\alpha}(\Id-T)$ is firmly non-expansive, \ie $\frac{1}{2\alpha}(\Id-T) \in \A(\frac{1}{2})$.
\end{lemma}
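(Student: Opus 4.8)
The plan is to unfold the definition of $\A(\alpha)$, write $\Id-T$ in terms of the underlying non-expansive operator, and then invoke the standard correspondence between firmly non-expansive operators and non-expansive operators.

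\textbf{Key steps.} First I would write $T\in\A(\alpha)$ in the form $T=\Id+\alpha(R-\Id)=(1-\alpha)\Id+\alpha R$ for some non-expansive $R$. This immediately gives
\[
\Id-T = \Id-(1-\alpha)\Id-\alpha R = \alpha(\Id-R),
\]
hence $\frac{1}{2\alpha}(\Id-T)=\frac{1}{2}(\Id-R)$. Next I would recall the elementary characterization (see e.g. \cite{bauschke2011convex}): an operator $N$ is firmly non-expansive, i.e. $N\in\A(\tfrac12)$, if and only if $2N-\Id$ is non-expansive. Applying this with $N=\frac{1}{2}(\Id-R)$, we compute $2N-\Id = (\Id-R)-\Id = -R$. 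Since $\norm{-Rx+Ry}=\norm{Rx-Ry}\leq\norm{x-y}$, the operator $-R$ is non-expansive because $R$ is, and therefore $N=\frac{1}{2\alpha}(\Id-T)\in\A(\tfrac12)$, which is the claim.

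\textbf{Main obstacle.} There is essentially no obstacle here; the only point requiring a word of care is that being non-expansive is invariant under negation (the norm is symmetric), so that $-R$ inherits non-expansiveness from $R$. Everything else is a direct substitution.
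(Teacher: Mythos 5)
Your proof is correct and follows essentially the same route as the paper: both reduce $\frac{1}{2\alpha}(\Id-T)$ to $\frac{1}{2}(\Id+(-R))$ and observe that $-R$ is non-expansive. The paper simply states this identity as immediate from the definition of $\A(\alpha)$, while you spell out the intermediate algebra and the firmly-non-expansive characterization explicitly.
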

%%%%%%%%%%%
\begin{proof}
From the definition, it is straightforward to see that
\begin{equation*}
\tfrac{1}{2\alpha}(\Id-T) = \tfrac{1}{2}(\Id+(-R)).  \qedhere
\end{equation*}
\end{proof}

%%%%%%%%%%%%%%%%%%%%%%%%%%%%%%%%%%%%
\begin{lemma}
\label{lemma:prop_subdiff}
If the operator $B$ is $\beta$-cocoercive, $\beta>0$, then
$\Id - \gamma B \in \A(\frac{\gamma}{2\beta}),~\mathrm{with}~\gamma\in]0,2\beta[$.
\end{lemma}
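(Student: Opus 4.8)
The plan is to prove that $\Id - \gamma B \in \A(\tfrac{\gamma}{2\beta})$ by exhibiting an explicit nonexpansive operator $R$ such that $\Id - \gamma B = \Id + \tfrac{\gamma}{2\beta}(R - \Id)$, which means I need $R = \Id - 2\beta B$ and I must show this $R$ is nonexpansive. Since $\gamma \in \,]0, 2\beta[$, the averaging parameter $\tfrac{\gamma}{2\beta}$ indeed lies in $]0,1[$, so the claimed membership makes sense.

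First I would recall that $\beta$-cocoercivity of $B$ means $\langle Bx - By, x - y\rangle \geq \beta \norm{Bx - By}^2$ for all $x,y$. The key step is then a direct expansion: for any $x,y \in \H$,
\begin{equation*}
\norm{(\Id - 2\beta B)x - (\Id - 2\beta B)y}^2 = \norm{x-y}^2 - 4\beta\langle x-y, Bx-By\rangle + 4\beta^2\norm{Bx-By}^2.
\end{equation*}
Using cocoercivity to bound $-4\beta\langle x-y, Bx-By\rangle \leq -4\beta^2\norm{Bx-By}^2$, the two $\beta^2$-terms cancel and we get $\norm{(\Id - 2\beta B)x - (\Id - 2\beta B)y}^2 \leq \norm{x-y}^2$, i.e. $R := \Id - 2\beta B$ is nonexpansive. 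Then $\Id + \tfrac{\gamma}{2\beta}(R - \Id) = \Id + \tfrac{\gamma}{2\beta}(-2\beta B) = \Id - \gamma B$, which is exactly the definition of $\Id - \gamma B$ being $\tfrac{\gamma}{2\beta}$-averaged.

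I expect no real obstacle here; the only thing to be slightly careful about is that the definition of $\A(\alpha)$ in the paper requires $\alpha \in ]0,1[$, so one should note explicitly that $\gamma \in ]0,2\beta[$ guarantees $\tfrac{\gamma}{2\beta} \in ]0,1[$. Alternatively, one could quote the standard fact (e.g. from \cite{bauschke2011convex}) that an operator is $\beta$-cocoercive if and only if $\beta B$ is firmly nonexpansive, i.e. $\beta B \in \A(\tfrac12)$, and then invoke the composition/scaling rule for averaged operators; but the self-contained computation above is shorter and avoids citing an auxiliary lemma.
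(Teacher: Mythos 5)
Your argument is correct: the expansion of $\norm{(\Id-2\beta B)x-(\Id-2\beta B)y}^2$ is right, the cocoercivity inequality makes the two $4\beta^2\norm{Bx-By}^2$ terms cancel, and the identity $\Id+\tfrac{\gamma}{2\beta}\big((\Id-2\beta B)-\Id\big)=\Id-\gamma B$ together with $\tfrac{\gamma}{2\beta}\in\,]0,1[$ gives exactly the claimed membership in $\A(\tfrac{\gamma}{2\beta})$. The paper, however, does not prove anything here: it simply invokes the Baillon--Haddad theorem via \cite[Proposition~4.33]{bauschke2011convex}, i.e.\ the standard equivalence between $\beta$-cocoercivity of $B$, firm nonexpansiveness of $\beta B$, and $\Id-\gamma B$ being $\tfrac{\gamma}{2\beta}$-averaged for $\gamma\in\,]0,2\beta[$. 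So your route is genuinely different in presentation: you give a two-line self-contained verification where the paper outsources the whole statement to a textbook. What your version buys is independence from the reference and transparency about where cocoercivity is actually used; what the citation buys is brevity and a pointer to the general equivalence (which also covers the converse direction, not needed here). One minor remark: strictly speaking the Baillon--Haddad theorem is the statement that a nonexpansive gradient of a convex function is firmly nonexpansive; the fact actually needed in this lemma is the purely operator-theoretic equivalence you identify in your last sentence, so your framing is arguably the more accurate one.
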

%%%%%%%%%%%
\begin{proof}
This is Baillon-Haddad theorem, see e.g., \cite[Proposition~4.33]{bauschke2011convex}.
\end{proof}

%%%%%%%%%%%%%%%%%%%%%%%%%%%%%%%%%%%
\begin{lemma}
For $\be^k$, the following inequality holds
\label{lemma:property_ek}
\begin{equation*}
\frac{1}{2\alpha\lambda_{k}}\bnorm{\be^k-\be^{k+1}}^2 \leq \bdprod{\be^k-\bepsilon^{k}}{\be^k-\be^{k+1}}.
\end{equation*}
\end{lemma}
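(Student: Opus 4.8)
The key identity to exploit is $\be^k = (\bId - \bT)\bz^k$, which writes the residual at step $k$ as the image of $\bz^k$ under the operator $\bId - \bT$. By Proposition \ref{T_prop1}, $\bT \in \A(\alpha)$, so Lemma \ref{lemma:id_t} tells us that $\frac{1}{2\alpha}(\bId - \bT)$ is firmly nonexpansive. The plan is to apply the defining inequality of firm nonexpansiveness (equivalently, the cocoercivity characterization: $\langle Sx - Sy, x - y\rangle \geq \bnorm{Sx - Sy}^2$ for $S$ firmly nonexpansive) to $S = \frac{1}{2\alpha}(\bId - \bT)$ evaluated at the two points $\bz^k$ and $\bz^{k+1}$.

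Concretely, I would first write $S\bz^k = \frac{1}{2\alpha}\be^k$ and $S\bz^{k+1} = \frac{1}{2\alpha}\be^{k+1}$, so that firm nonexpansiveness gives
\begin{equation*}
\tfrac{1}{4\alpha^2}\bnorm{\be^k - \be^{k+1}}^2 \leq \tfrac{1}{2\alpha}\bdprod{\be^k - \be^{k+1}}{\bz^k - \bz^{k+1}},
\end{equation*}
i.e. $\frac{1}{2\alpha}\bnorm{\be^k - \be^{k+1}}^2 \leq \bdprod{\be^k - \be^{k+1}}{\bz^k - \bz^{k+1}}$. The second ingredient is the update relation \eqref{eq:ek}, which yields $\bz^k - \bz^{k+1} = \lambda_k(\be^k - \bepsilon^k)$. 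Substituting this in and dividing by $\lambda_k > 0$ produces exactly
\begin{equation*}
\tfrac{1}{2\alpha\lambda_k}\bnorm{\be^k - \be^{k+1}}^2 \leq \bdprod{\be^k - \bepsilon^k}{\be^k - \be^{k+1}},
\end{equation*}
which is the claim.

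The only genuinely delicate point is bookkeeping the relaxation parameter: one must be careful that $\be^k = (\bId - \bT)\bz^k$ is defined with the \emph{unrelaxed} operator $\bT$ (so Lemma \ref{lemma:id_t} applies with averaging constant $\alpha$, not $\alpha\lambda_k$), while the factor $\lambda_k$ enters only through the recursion $\bz^k - \bz^{k+1} = \lambda_k(\be^k - \bepsilon^k)$ coming from \eqref{eq:error_fp}–\eqref{eq:ek}. Everything else is a one-line substitution, so I do not expect any real obstacle beyond keeping the $2\alpha$ versus $2\alpha\lambda_k$ distinction straight.
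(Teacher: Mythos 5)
Your proposal is correct and follows essentially the same route as the paper: both apply the firm nonexpansiveness of $\frac{1}{2\alpha}(\bId-\bT)$ (via Lemma \ref{lemma:id_t}) at the points $\bz^k$ and $\bz^{k+1}$, then substitute $\bz^k-\bz^{k+1}=\lambda_k(\be^k-\bepsilon^k)$ from \eqref{eq:ek}. You merely spell out the final substitution step that the paper leaves implicit.
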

%%%%%%%%%%%%%%
\begin{proof}
Let $\bE: \bH \rightarrow \bH, \bz \mapsto (\bId-\bT)\bz$, then $\frac{1}{2\alpha}\bE=\frac{1}{2}\big(\bId+(-\bR)\big) \in \A(\frac{1}{2})$ (Lemma \ref{lemma:id_t}), and thus $\forall \bp,\bq \in \bH$,
\begin{equation*}
\label{eq:property_ek1}
\bnorm{\tfrac{1}{2\alpha}\bE(\bp) - \tfrac{1}{2\alpha}\bE(\bq)}^2 \leq \bdprod{\bp-\bq}{\tfrac{1}{2\alpha}\bE(\bp) - \tfrac{1}{2\alpha}\bE(\bq)},
\end{equation*}
substituting $\bz^{k}$ and $\bz^{k+1}$ for $\bp,~\bq$ yields the result.
\end{proof}

%%%%%%%%%%%%%%%%%%%%%%%%%%%%%%%%%%%

Denote $\nu_1 = 2\sup_{k\in\N}\bnorm{\bT_k\bz^{k}-\bz^\star} + \sup_{k\in\N}\lambda_k\bnorm{\bepsilon^{k}}$.
%%%%%%%%%%%%%%%%%%%%%%%%%%%%%%%%%%%
\begin{lemma}
\label{lemma:property_zk}
For $\bz^\star \in \Fix(\bT)$, $\lambda_k\in]0,\frac{1}{\alpha}[$, we have
\begin{equation*}
\bnorm{\bz^{k+1}-\bz^\star}^2 \leq \bnorm{\bz^{k}-\bz^\star}^2 - {\lambda_{k}(\frac{1}{\alpha}-\lambda_{k})}\bnorm{\be^{k}}^2 + \nu_1\lambda_k\bnorm{\bepsilon^{k}}.
\end{equation*}
\end{lemma}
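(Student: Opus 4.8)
The plan is to expand the squared distance $\bnorm{\bz^{k+1}-\bz^\star}^2$ using the update $\bz^{k+1} = \bT_k\bz^k + \lambda_k\bepsilon^k$, and then to exploit the averagedness of $\bT_k$ together with the identification $\be^k = (\bId-\bT)\bz^k$. Concretely, I would first write
\[
\bnorm{\bz^{k+1}-\bz^\star}^2 = \bnorm{\bT_k\bz^k-\bz^\star}^2 + 2\lambda_k\bdprod{\bT_k\bz^k-\bz^\star}{\bepsilon^k} + \lambda_k^2\bnorm{\bepsilon^k}^2,
\]
and bound the last two terms crudely by $\lambda_k\bnorm{\bepsilon^k}\big(2\bnorm{\bT_k\bz^k-\bz^\star} + \lambda_k\bnorm{\bepsilon^k}\big) \le \nu_1\lambda_k\bnorm{\bepsilon^k}$, which is exactly where the constant $\nu_1$ is engineered to appear (Cauchy--Schwarz on the cross term, then take suprema).

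Next I would handle the main term $\bnorm{\bT_k\bz^k-\bz^\star}^2$. Since $\bT_k \in \A(\alpha\lambda_k)$ and $\bz^\star \in \Fix(\bT) = \Fix(\bT_k)$, the standard averaged-operator inequality gives
\[
\bnorm{\bT_k\bz^k-\bz^\star}^2 \le \bnorm{\bz^k-\bz^\star}^2 - \frac{1-\alpha\lambda_k}{\alpha\lambda_k}\bnorm{(\bId-\bT_k)\bz^k}^2.
\]
Then I would use $(\bId-\bT_k)\bz^k = \lambda_k(\bId-\bT)\bz^k = \lambda_k\be^k$, so that $\bnorm{(\bId-\bT_k)\bz^k}^2 = \lambda_k^2\bnorm{\be^k}^2$, and the coefficient becomes $\frac{1-\alpha\lambda_k}{\alpha\lambda_k}\lambda_k^2 = \lambda_k\big(\frac{1}{\alpha}-\lambda_k\big)$, which is precisely $\tau_k$. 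Combining this with the error-term bound from the previous paragraph yields the claimed inequality.

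The only slightly delicate point is establishing the averaged-operator descent inequality in the displayed form above: for $S \in \A(\delta)$ with fixed point $\bar z$, one has $\bnorm{S z - \bar z}^2 \le \bnorm{z-\bar z}^2 - \frac{1-\delta}{\delta}\bnorm{(\bId-S)z}^2$. This follows by writing $S = \bId + \delta(R-\bId)$ with $R$ nonexpansive fixing $\bar z$, expanding $\bnorm{Sz-\bar z}^2 = \bnorm{(1-\delta)(z-\bar z) + \delta(Rz-\bar z)}^2$ and using the identity $\bnorm{(1-\delta)a+\delta b}^2 = (1-\delta)\bnorm{a}^2 + \delta\bnorm{b}^2 - \delta(1-\delta)\bnorm{a-b}^2$ together with $\bnorm{Rz-\bar z}\le\bnorm{z-\bar z}$ and $(\bId-S)z = \delta(z - Rz)$. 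This is the computational heart of the argument; everything else is bookkeeping with Cauchy--Schwarz and the definition of $\nu_1$ and $\tau_k$. One should also note the harmless abuse in the statement that writes $\nu_1 = 2\sup_k\bnorm{\bT_{\lambda_k}\bz^k - \bz^\star} + \dots$ versus $\bnorm{\bT_k\bz^k-\bz^\star}$ in the Lemma~\ref{lemma:property_zk} restatement; these denote the same quantity, so no separate argument is needed.
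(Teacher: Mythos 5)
Your proof is correct and follows essentially the same route as the paper: both isolate the error term via $\nu_1$ (the paper by triangle inequality then squaring, you by Cauchy--Schwarz on the cross term, which gives the identical bound), and both reduce the main term to the convexity identity $\bnorm{(1-\delta)a+\delta b}^2=(1-\delta)\bnorm{a}^2+\delta\bnorm{b}^2-\delta(1-\delta)\bnorm{a-b}^2$ applied to $\bT_k=(1-\alpha\lambda_k)\bId+\alpha\lambda_k\bR$, yielding the coefficient $\tau_k=\lambda_k(\tfrac{1}{\alpha}-\lambda_k)$. Your observation that $\bT_{\lambda_k}$ in the definition of $\nu_1$ and $\bT_k$ denote the same operator is also accurate.
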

%%%%%%%%%%%%%%%%%%%%%%%%%%%%%%
\begin{proof}
Proposition~\ref{T_prop}(i) implies that $\bnorm{\bz^k - \bT\bz^k}^2 = \alpha^2\bnorm{\bz^k - \bR\bz^k}^2$ for some non-expansive operator $\bR$. Therefore, using \cite[Corollary~2.14]{bauschke2011convex} we get
\begin{align*}
\bnorm{\bz^{k+1}-\bz^\star}^2
\leq\ & \big( \bnorm{\bT_{k} \bz^{k}-\bz^\star} + \lambda_k\bnorm{\bepsilon^{k}} \big)^2 \\
\leq\ & (1-\alpha\lambda_{k})\bnorm{\bz^{k}-\bz^\star}^2 + \alpha\lambda_{k}\bnorm{\bR\bz^{k}-\bz^\star}^2 \\
~ &  - \alpha\lambda_k(1-\alpha\lambda_k)\bnorm{\bz^k-\bR\bz^k}^2 + \nu_1\lambda_k\bnorm{\bepsilon^{k}} \\
\leq\ & \bnorm{\bz^{k}-\bz^\star}^2 - \lambda_{k}(\frac{1}{\alpha}-\lambda_{k})\bnorm{\be^{k}}^2  + \nu_1\lambda_k\bnorm{\bepsilon^{k}}. 
\end{align*} 
\end{proof}

% Lemma \ref{lemma:zkzdecreasing} means that sequence $\{\bnorm{\bz^k-\bz^\star}\}$ is monotonically non-increasing. Next, we will show that $\{\bnorm{\be^k}\}$ is also monotonically non-increasing.

Denote $\nu_2 = 2\sup_{k\in\N}\bnorm{\be^k-\be^{k+1}}$.
%%%%%%%%%%%%%%%%%%%%%%%%%%%%%%%%%%%%%%%%%%%%%%%%%%%%%%%%%%%%%%%%%%%%%%
\begin{lemma}
\label{lemma:property_ek2}
For $\lambda_k\in]0,\frac{1}{\alpha}[$, the sequence $(\be^k)_{k\in\N}$ obeys
\begin{equation*}
\bnorm{\be^{k+1}}^2 - \nu_2\bnorm{\bepsilon^k} \leq \bnorm{\be^k}^2.
\end{equation*}
\end{lemma}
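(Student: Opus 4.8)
The plan is to exploit the firm nonexpansiveness of $\tfrac{1}{2\alpha}(\bId-\bT)$ established in Lemma~\ref{lemma:property_ek}, which gives $\tfrac{1}{2\alpha\lambda_k}\bnorm{\be^k-\be^{k+1}}^2 \leq \bdprod{\be^k-\bepsilon^k}{\be^k-\be^{k+1}}$. I would start from this inequality and expand the right-hand side using the polarization-type identity $2\bdprod{\ba}{\bb} = \bnorm{\ba}^2 + \bnorm{\bb}^2 - \bnorm{\ba-\bb}^2$, or more directly write $\bdprod{\be^k}{\be^k-\be^{k+1}} = \bnorm{\be^k}^2 - \bdprod{\be^k}{\be^{k+1}}$ and bound things so as to isolate $\bnorm{\be^{k+1}}^2$ on the left. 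Concretely, one can show $\bdprod{\be^k}{\be^k-\be^{k+1}} \geq \tfrac{1}{2}\big(\bnorm{\be^k}^2 - \bnorm{\be^{k+1}}^2\big) + \tfrac{1}{2}\bnorm{\be^k-\be^{k+1}}^2$ from the identity above, so the firm-nonexpansiveness inequality yields (after dropping a nonnegative term and using $\lambda_k < 1/\alpha$, hence $\tfrac{1}{2\alpha\lambda_k} \geq \tfrac{1}{2}$) something like $\tfrac{1}{2}\bnorm{\be^{k+1}}^2 \leq \tfrac{1}{2}\bnorm{\be^k}^2 - \bdprod{\bepsilon^k}{\be^k-\be^{k+1}}$.

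It then remains to absorb the error cross-term. I would bound $|\bdprod{\bepsilon^k}{\be^k-\be^{k+1}}| \leq \bnorm{\bepsilon^k}\,\bnorm{\be^k-\be^{k+1}}$ by Cauchy--Schwarz, and then invoke the definition $\nu_2 = 2\sup_{k}\bnorm{\be^k-\be^{k+1}}$ to get $\bnorm{\be^k-\be^{k+1}} \leq \nu_2/2$, so that $2\,|\bdprod{\bepsilon^k}{\be^k-\be^{k+1}}| \leq \nu_2\bnorm{\bepsilon^k}$. Multiplying the displayed inequality by $2$ gives $\bnorm{\be^{k+1}}^2 \leq \bnorm{\be^k}^2 + \nu_2\bnorm{\bepsilon^k}$, which is exactly the claim after rearrangement. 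One should double-check that $\nu_2 < \infty$ is implicitly assumed (it is, since it appears in the hypotheses/conclusions of Theorem~\ref{thm:pointwise_irfpi_bounds} where this lemma is applied), so the bound $\bnorm{\be^k-\be^{k+1}} \leq \nu_2/2$ is legitimate and uniform in $k$.

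The main obstacle is purely bookkeeping: getting the constant $\tfrac{1}{2\alpha\lambda_k}$ and the factor $\tfrac{1}{2}$ from the polarization identity to line up so that the coefficient in front of $\bnorm{\be^{k+1}}^2$ is at least $1$ after the final multiplication, i.e. making sure the slack from $\lambda_k < 1/\alpha$ is used in the right direction and no term with the wrong sign is kept. There is no deep step here — the only subtlety is whether one needs $\tfrac{1}{2\alpha\lambda_k}\geq\tfrac12$ (true) versus an equality, and whether the nonnegative term $\big(\tfrac{1}{2\alpha\lambda_k}-\tfrac12\big)\bnorm{\be^k-\be^{k+1}}^2$ can simply be discarded (it can, being nonnegative). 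So I expect a three-line proof once the polarization identity and Cauchy--Schwarz are applied.
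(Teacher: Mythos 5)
Your proposal is correct and follows essentially the same route as the paper: both arguments combine the firm-nonexpansiveness inequality of Lemma~\ref{lemma:property_ek} with the expansion of $\bnorm{\be^{k+1}}^2 = \bnorm{\be^k}^2 - 2\bdprod{\be^k}{\be^k-\be^{k+1}} + \bnorm{\be^k-\be^{k+1}}^2$, discard the nonpositive term $-\tfrac{1-\alpha\lambda_k}{\alpha\lambda_k}\bnorm{\be^k-\be^{k+1}}^2$ using $\lambda_k < 1/\alpha$, and absorb the cross-term via Cauchy--Schwarz and the definition of $\nu_2$. The only cosmetic difference is that you rearrange starting from the firm-nonexpansiveness inequality while the paper starts from expanding $\bnorm{\be^{k+1}}^2$; the computation is the same.
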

%%%%%%%%%%%%%%
\begin{proof}
Using Lemma~\ref{lemma:property_ek}, we get
\begin{equation*}
\begin{aligned}
\bnorm{\be^{k+1}}^2
=& \bnorm{\be^{k}}^2 - 2\bdprod{\be^{k}}{\be^{k}-\be^{k+1}} + \bnorm{\be^{k}-\be^{k+1}}^2 \\
\leq& \bnorm{\be^{k}}^2 - \frac{1-\alpha\lambda_k}{\alpha\lambda_k}\bnorm{\be^{k}-\be^{k+1}}^2 + \nu_2\bnorm{\bepsilon^k} \\
\leq& \bnorm{\be^{k}}^2 + \nu_2\bnorm{\bepsilon^k}. \qedhere
\end{aligned} 
\end{equation*}
\end{proof}

%%%%%%%%%%%%%%%%%%%%%%%%%%%%%%%%%%%%%%%%%%%%%%%%%%%%%%%%%%%%%%%%%%%%%%
\subsection{Proofs of main results}
\paragraph*{Proof of Theorem~\ref{thm:pointwise_irfpi_bounds}}
\begin{enumerate}[label={\rm (\roman{*})}, ref={\rm (\roman{*})}, leftmargin=0cm,itemindent=0.5cm,labelwidth=\itemindent,labelsep=0cm,align=left]
%%%%%%%%%%
\item This is an adaptation of \cite[Lemma 5.1]{combettes2004solving}.
\item
$(\mathrm{\rmnum{1}})$ implies that $\bnorm{\be^k}$ and $\bnorm{\bz^k-\bz^\star}$ are bounded, since $(\lambda_k\bnorm{\bepsilon^k})_{k\in\N} \in \ell_+^1$, therefore, $\nu_1$ and $\nu_2$ are bounded constant. Then from Lemma \ref{lemma:property_ek2}, $\forall j \leq k$,
\begin{equation*}
\bnorm{\be^k}^2 - \nu_2\msum_{\ell=j}^{k-1}\bnorm{\bepsilon^{\ell}} \leq \bnorm{\be^{j}}^2 ~.
\end{equation*}
Inserting this in Lemma \ref{lemma:property_zk}, and summing up over $j$, we get
%\[
%\begin{aligned}
%& \sum_{j=0}^k\lambda_{j}(\frac{1}{\alpha}-\lambda_{j})\big(\bnorm{\be^k}^2 - \nu_2\sum_{\ell=j}^{k-1}\bnorm{\bepsilon^{\ell}} \big) \\
%&\leq \bnorm{\bz^0 - \bz^\star}^2 + \nu_1\sum_{j=0}^{k}\lambda_j\bnorm{\bepsilon^j},
%\end{aligned}
%\]
%which further simplifies to,
\[
\label{eq:main_ineq}
\msum_{j=0}^k\tau_{j}\bnorm{\be^k}^2 \leq d_0^2 + \nu_1\msum_{j=0}^{k}\lambda_j\bnorm{\bepsilon^j} + \nu_2\msum_{j=0}^k\tau_{j}\msum_{\ell=j}^{k-1}\bnorm{\bepsilon^{\ell}},
\]
whence we obtain
\begin{equation*}
(k+1)\underline{\tau}\bnorm{\be^k}^2 \leq d_0^2 + \nu_1\msum_{j=0}^{k}\lambda_j\bnorm{\bepsilon^j} + \nu_2\overline{\tau}\msum_{\ell=0}^{k-1}(\ell+1)\bnorm{\bepsilon^{\ell}} ~.
\end{equation*}
Assumption~\ref{condition3} then yields the bound \eqref{eq:bound1}.
%\[
%\bnorm{\be^k}^2 \leq \frac{d_0^2 + C_1}{\underline{\tau}(k+1)}.
%\]
%%%%%%%%%%
\item $\frac{1}{2\alpha} \leq \lambda_k \leq \sup_{k \in \N} \lambda_k < \frac{1}{\alpha}$ is non-decreasing implies that $\tau_k$ is non-increasing. Hence from \eqref{eq:main_ineq}, we get \eqref{eq:bound2}. 
\end{enumerate}

%%%%%%%%%%%%%%%%%%%%%%%%%%%%%%%%%%%%%%%%%%%%%%%%%%%%%%%%%%%%%%%%%%%%%%
\paragraph*{Proof of Theorem~\ref{thm:ergodic_irfpi_bound}}
Nonexpansiveness of $\bT_{k}$ implies
\begin{equation*}
\bnorm{\bz^{k+1}-\bz^\star} \leq \bnorm{\bz^{0}-\bz^\star} + \msum_{j=0}^k \lambda_{j}\bnorm{\bepsilon^{j}} .
\end{equation*}
Combining this with the definition of $\bar{\be}^k$, we arrive at 
\begin{equation*}
\bnorm{\bar{\be}^{k}}\leq \frac{1}{\Lambda_k} ( \bnorm{\bz^{0}-\bz^{k+1}} + \msum_{j=0}^k\lambda_j\bnorm{\bepsilon^j} ) \leq {2(d_0 + C_3)}/{\Lambda_k}. 
\end{equation*}
%%%%%%%%%%%%%%%%%%

%%%%%%%%%%%%%%%%%%%%%%%%%%%%%%%%%%%%%%%%%%%%%%%%%%%%%%%%%%%%%%%%%%%%%%
\paragraph*{Proof of Theorem~\ref{thm:pointwise_bnd_monotone_inclusion}}
From the update formula of $u_i^{k+1}$, we get
\begin{align*}
{\gamma^{-1}}x^k-\nabla f(x^{k}) &- {\gamma^{-1}}(\ssum_i\omega_iu_i^{k+1}) + \gamma \nabla f(\ssum_i\omega_iu_i^{k+1}) \\
&~~~~~~ \in \msum_i\partial h_i(u_i^{k+1}) + \nabla f(\ssum_i\omega_iu_i^{k+1}) ~.
\end{align*}
Since $\Id -\gamma \nabla f \in \A(\frac{\gamma}{2\beta})$ (Lemma \ref{lemma:prop_subdiff}), hence nonexpansive, and using Theorem~\ref{pointwise_bnds3}, we arrive at
\begin{align*}
\norm{g^k+\nabla f(\ssum_i\omega_iu_i^{k+1})} 
\leq& {\gamma^{-1}}\norm{x^{k} - \msum_i\omega_iu_i^{k+1}}  \\
\leq& {\gamma^{-1}}\bnorm{\bx^k-\bu^{k+1}} \\
=& {\gamma^{-1}}\bnorm{\be^{k}} 
\leq \frac{1}{\gamma}\sqrt{\frac{d_0^2 + C_2}{\tau_k(k+1)}}. 
\end{align*}

%%%%%%%%%%%%%%%%%%%%%%%%%%%%%%%%%%%%%%%%%%%%%%%%%%%%%%%%%%%%%%%%%%%%%%
\paragraph*{Proof of Theorem~\ref{thm:ergodic_bnd_monotone_inclusion}}
Owing to Theorem \ref{thm:pointwise_bnd_monotone_inclusion}, we get
\begin{align*}
\norm{\bar{g}^k+\nabla f(\ssum_i\omega_i\bar{u}_i^{k})}
\leq& {\gamma^{-1}}\norm{\bar{x}^{k} - \msum_i\omega_i\bar{u}_i^{k}}
\leq {\gamma^{-1}}\bnorm{\bar{\bx}^{k} - \bar{\bu}^{k}} \\
\leq& {\gamma^{-1}} \bnorm{\bar{\be}^{k}} \leq {2(d_0+C_3)}/{(\gamma\Lambda_k)}. 
\end{align*}

%\newpage
%-------------------------------------------------------------------------------------------
% References should be produced using the bibtex program from suitable
% BiBTeX files (here: strings, refs, manuals). The IEEEbib.bst bibliography
% style file from IEEE produces unsorted bibliography list.
%-------------------------------------------------------------------------------------------

\bibliographystyle{IEEEbib}
\bibliography{egbib}

\end{document}